\numberwithin{equation}{section}
\newtheorem{thm}{Theorem}[section]
\newtheorem{cor}[thm]{Corollary}
\theoremstyle{definition}
\newtheorem{defn}{Definition}[section]
\newtheorem{remark}{Remark}[section]
\newtheorem{example}{Example}[section]
\newtheorem{question}{Question}[section]
\begin{document}
\title{Perturbations of frames}

\author{Dongyang Chen, Lei Li and Bentuo Zheng}
\address{School of Mathematical Sciences\\ Xiamen University,
Xiamen,361005,China}
\email{cdy@xmu.edu.cn}
\address{School of Mathematical Sciences and LPMC, Nankai University, Tianjin, 300071, China}
\email{leilee@nankai.edu.cn}
\address{Department of Mathematical Sciences\\ The University of Memphis\\Memphis, TN 38152-3240}
\email{bzheng@memphis.edu}

\thanks{Dongyang Chen's research is supported in part by National Natural Science
Foundation of China (Grant Nos. 10701063,11001231)and the Fundamental
Research Funds for the Central Universities(Grant No.2011121039); Dongyang Chen
is a participant in the NSF Workshop in Analysis and Probability, Texas A\&M University. \\
Bentuo Zheng's Research is supported in part by NSF grant DMS-1200370;
Bentuo Zheng is a participant of the workshop at Texas A\&M University.}
\begin{abstract}
In this paper, we give some sufficient conditions under which perturbations
preserve Hilbert frames and near-Riesz bases. Similar results are also extended to
frame sequences, Riesz sequences and Schauder frames.
It is worth mentioning that some of our perturbation conditions are quite different
from those used in the previous literatures on this topic.
\end{abstract}
\date{Version: \today}
\maketitle

\baselineskip=18pt      

\section{Introduction}

Perturbation theory is a very important tool in several areas of mathematics.
It went back to the classical perturbation result by Paley and Wiener \cite{PW},
stating that a sequence that is sufficiently near to an (orthonormal)
basis in a Hilbert space automatically forms a basis. Boas \cite{B} observed that
the proof given by Paley and Wiener remains valid in an arbitrary Banach space:
\begin{thm}
Let $\{x_{n}\}_{n=1}^{\infty}$ be a basis for a Banach space
$X$ and let $\{y_{n}\}_{n=1}^{\infty}$ be a sequence in $X$.
If there exists a constant $\lambda, 0\leq \lambda<1$,
such that $$\|\sum_{i=1}^{n}c_{i}(x_{i}-y_{i})\|\leq \lambda\|\sum_{i=1}^{n}c_{i}x_{i}\|$$
for all finite sequences of scalars $c_{1},c_{2},...,c_{n}$,
then $\{y_{n}\}_{n=1}^\infty$ is also a basis for $X$.
\end{thm}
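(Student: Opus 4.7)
The plan is to construct a bounded invertible operator $T\colon X\to X$ that sends $x_n$ to $y_n$; once this is in hand, $\{y_n\}=\{Tx_n\}$ is automatically a basis, since bases are preserved under topological isomorphisms. The natural candidate is $T=I-S$, where $S$ is the operator that encodes the perturbation $x_n\mapsto x_n-y_n$.

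First I would define $S$ on the dense subspace of finite linear combinations by $S\bigl(\sum_{i=1}^n c_i x_i\bigr)=\sum_{i=1}^n c_i(x_i-y_i)$. The hypothesis gives $\|S(\sum c_ix_i)\|\le\lambda\|\sum c_ix_i\|$ directly, so $S$ is bounded on this dense subspace with norm at most $\lambda$. To extend $S$ to all of $X$, I would fix $x=\sum_{i=1}^\infty c_ix_i\in X$ (unique expansion since $\{x_n\}$ is a basis) and use the Cauchy criterion: for the tail $\sum_{i=m}^{n}c_i(x_i-y_i)$ the hypothesis yields
\[
\Bigl\|\sum_{i=m}^{n}c_i(x_i-y_i)\Bigr\|\le\lambda\Bigl\|\sum_{i=m}^{n}c_ix_i\Bigr\|=\lambda\|P_nx-P_{m-1}x\|,
\]
where $P_k$ denotes the partial sum projections associated with the basis. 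Since $P_kx\to x$, the right-hand side tends to $0$, so $\sum c_i(x_i-y_i)$ converges in $X$ and we may set $Sx:=\sum_{i=1}^\infty c_i(x_i-y_i)$. Passing to the limit in the finite-sum inequality gives $\|Sx\|\le\lambda\|x\|$, hence $S\in\mathcal{B}(X)$ with $\|S\|\le\lambda<1$.

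Next, because $\|S\|<1$, the Neumann series shows $T:=I-S$ is a bounded invertible operator on $X$ with $T^{-1}=\sum_{k=0}^\infty S^k$. By construction $Tx_n=x_n-(x_n-y_n)=y_n$ for every $n$, so $\{y_n\}$ is the image of the basis $\{x_n\}$ under the topological isomorphism $T$. A straightforward verification (uniqueness of the expansion $y=\sum d_n y_n$ with $d_n$ the coefficient functionals of $\{x_n\}$ applied to $T^{-1}y$) shows that $\{y_n\}$ is a basis for $X$.

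The main technical obstacle is the extension step: the perturbation bound is stated only for finite sums against the basis $\{x_n\}$, so the argument must leverage the existence of the basis expansion plus the Cauchy tail estimate to get a globally defined bounded operator. Once $S\in\mathcal{B}(X)$ with $\|S\|<1$ is established, the rest is the standard Neumann-series/isomorphism-of-bases packaging.
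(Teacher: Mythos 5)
Your proof is correct and is precisely the classical Paley--Wiener/Boas argument that the paper cites for Theorem 1.1 without reproducing: the tail estimate (obtained by setting $c_1=\dots=c_{m-1}=0$ in the hypothesis) gives convergence of $\sum c_i(x_i-y_i)$, hence a bounded operator $S$ with $\|S\|\le\lambda<1$, and $T=I-S$ is an invertible operator carrying the basis $\{x_n\}$ onto $\{y_n\}$. No gaps.
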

Since then, a number of variations and generalizations of this perturbation
theorem have appeared (see \cite{S}, pages.84--109).

The Paley-Wiener Theorem is so useful to show that a sequence $\{y_{n}\}_{n=1}^{\infty}$
is a Riesz basis for a Hilbert space $\mathcal{H}$ that this result is sometimes used in wavelet analysis.
But in many cases the wavelet experts prefer to work with frames, a more flexible tool,
instead of Riesz bases. Recall that a sequence $\{f_{k}\}_{k=1}^{\infty}$ in a Hilbert space $\mathcal{H}$ is a frame for $\mathcal{H}$ if there exist constants $A, B>0$ such that $$ A\|f\|^{2}\leq \sum_{k=1}^{\infty}|<f,f_{k}>|^{2}\leq B\|f\|^{2}, \forall f\in \mathcal{H}.$$  The numbers $A,B$ are called frame bounds.

Christensen strengthened Theorem 1.1 by proving the following \cite{C1}:
\begin{thm}
Let $\{f_{k}\}_{k=1}^{\infty}$ be a frame for Hilbert space $\mathcal{H}$ with bounds $A, B$
and let $\{g_{k}\}_{k=1}^{\infty}$ be a sequence in $\mathcal{H}$.
Suppose that there exist $\lambda,\mu\geq 0: \lambda+{\mu\over{\sqrt{A}}}<1$
such that $$\|\sum_{k=1}^{n}c_{k}(f_{k}-g_{k})\|\leq \lambda\|\sum_{k=1}^{n}c_{k}f_{k}\|+\mu(\sum_{k=1}^{n}|c_{k}|^{2})^{1\over{2}}$$
for all finite sequences of scalars $c_{1},c_{2},...,c_{n}$,
then $\{g_{k}\}_{k}$ is a frame for $\mathcal{H}$ with bounds
$A(1-(\lambda+{\mu\over{\sqrt{A}}}))^{2},B(1+\lambda+{\mu\over{\sqrt{B}}}))^{2}.$
\end{thm}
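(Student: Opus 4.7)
The plan is to verify the upper and lower frame inequalities for $\{g_k\}$ separately. The upper (Bessel) bound is the easier of the two. For any finite sequence $(c_k)$, the triangle inequality combined with the hypothesis gives $\|\sum c_k g_k\|\leq(1+\lambda)\|\sum c_k f_k\|+\mu(\sum|c_k|^2)^{1/2}$, and the Bessel bound $B$ for $\{f_k\}$ then converts this to $\|\sum c_k g_k\|\leq\sqrt{B}(1+\lambda+\mu/\sqrt{B})(\sum|c_k|^2)^{1/2}$. Passing to the adjoint (synthesis-analysis duality on $\ell^2$) yields the upper frame inequality with the advertised constant $B(1+\lambda+\mu/\sqrt{B})^2$ and, in particular, shows that $\{g_k\}$ is a Bessel sequence.

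For the lower bound I would invoke the canonical dual frame $\{\tilde f_k\}=\{S^{-1}f_k\}$ of $\{f_k\}$, which provides the reconstruction $f=\sum\langle f,\tilde f_k\rangle f_k$ together with the estimate $\sum|\langle f,\tilde f_k\rangle|^2\leq\|f\|^2/A$ coming from $S^{-1}\leq A^{-1}I$. Define an auxiliary operator $V\colon\mathcal H\to\mathcal H$ by $Vf=\sum\langle f,\tilde f_k\rangle g_k$, which is well-defined and bounded thanks to the Bessel property just established. Plugging the coefficients $c_k=\langle f,\tilde f_k\rangle$ into the hypothesis gives $\|f-Vf\|=\|\sum c_k(f_k-g_k)\|\leq\lambda\|f\|+\mu\bigl(\sum|\langle f,\tilde f_k\rangle|^2\bigr)^{1/2}\leq(\lambda+\mu/\sqrt{A})\|f\|$, so $\|I-V\|\leq\lambda+\mu/\sqrt{A}<1$ and hence $V$ is invertible via the Neumann series, with $\|V^{-1}\|\leq 1/(1-(\lambda+\mu/\sqrt{A}))$.

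With $V$ invertible I would extract the lower bound from the identity $f=V(V^{-1}f)=\sum\langle V^{-1}f,\tilde f_k\rangle g_k$: taking the inner product with $f$ and applying Cauchy-Schwarz yields $\|f\|^4\leq\bigl(\sum|\langle V^{-1}f,\tilde f_k\rangle|^2\bigr)\bigl(\sum|\langle f,g_k\rangle|^2\bigr)\leq(\|V^{-1}f\|^2/A)\sum|\langle f,g_k\rangle|^2$, and after rearrangement this delivers the asserted lower constant $A(1-(\lambda+\mu/\sqrt{A}))^2$. The main obstacle is arranging the argument so that the Neumann inversion is actually applicable; the key observation is that plugging the dual-frame coefficients into the perturbation hypothesis is what converts the $\mu$-term into a clean $\mu/\sqrt{A}$ factor, explaining why $\lambda+\mu/\sqrt{A}<1$ is precisely the right threshold. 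Once $V$ has been recognized as a perturbation of the identity, the remaining work is the short Cauchy-Schwarz computation above.
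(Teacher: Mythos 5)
Your proof is correct, and it follows exactly the template this paper itself uses for its Theorem 2.1 (the statement here, Theorem 1.2, is quoted from Christensen without proof, but the argument is the standard one): upper bound via the synthesis operator and its adjoint, lower bound by perturbing the reconstruction operator built from the canonical dual frame, inverting by Neumann series, and finishing with Cauchy--Schwarz. The only point worth flagging is the routine limiting step needed to pass the hypothesis from finite scalar sequences to the $\ell^{2}$ coefficients $\langle f,\tilde f_{k}\rangle$, which your boundedness observations already justify.
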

Note that Casazza and Christensen \cite{CC1} added a whole term on the right hand of the above inequality.
From then on, some generalizations of this result are proved for Banach frames,
atomic decompositions and Schauder frames (\cite{CC2},\cite{CH},\cite{CLL}).
In this paper, we'll give some other perturbation conditions of Hilbert frames and Schauder frames in Banach spaces.

First we give a short introduction to frame theory which we need. For more information about general frame theory, we refer the readers to \cite{C3}.

We say that a frame $\{f_{k}\}_{k=1}^{\infty}$ for $\mathcal{H}$ is semi-normalized if $$0<\inf_{1\leq k<\infty}\|f_{k}\|\leq \sup_{1\leq k<\infty}\|f_{k}\|<\infty$$
It follows from the definition that if $\{f_{k}\}_{k=1}^{\infty}$ is a frame for $\mathcal{H}$, then $\{f_{k}\}_{k=1}^{\infty}$ is complete in $\mathcal{H}$, namely, $\overline{span}\{f_{k}\}_{k=1}^{\infty}=\mathcal{H}$. We often need to consider sequences which are not complete in $\mathcal{H}$. Although they can not form frames for the whole space $\mathcal{H}$, they can form frames for their closed linear span. We say that a sequence $\{f_{k}\}_{k=1}^{\infty}$ in $\mathcal{H}$ is a frame sequence if it is a frame for its closed linear span $\overline{span}\{f_{k}\}_{k=1}^{\infty}$.

If $\{f_{k}\}_{k=1}^{\infty}$  is a frame for $\mathcal{H}$, the operator $$ T: l^{2}\rightarrow \mathcal{H}, T(\{c_{k}\}_{k=1}^{\infty})=\sum_{k=1}^{\infty}c_{k}f_{k}$$ is bounded; $T$ is called the pre-frame operator or the synthesis operator. The adjoint operator is given by $$ T^{*}: \mathcal {H}\rightarrow l^{2}, T^{*}(f)=\{<f,f_{k}>\}_{k=1}^{\infty}.$$  $T^{*}$ is called the analysis operator.  By composing $T$ and $T^{*}$, we obtain the frame operator $$ S: \mathcal{H}\rightarrow \mathcal{H}, S(f)=TT^{*}(f)=\sum_{k=1}^{\infty}<f,f_{k}>f_{k}.$$  Then $S$ is invertible and $\{S^{-1}(f_{k})\}_{k=1}^{\infty}$ is also a frame for $\mathcal{H}$ with bounds $B^{-1},A^{-1}$.  The frame  $\{f_{k}\}_{k=1}^{\infty}$ has the following important frame decomposition:
\begin{equation}
f=\sum_{k=1}^{\infty}<f,S^{-1}(f_{k})>f_{k}, \forall f\in \mathcal{H}.
\end{equation}
So $\{S^{-1}(f_{k})\}_{k=1}^{\infty}$ plays the same role in frame theory as the coefficient functionals associated to a basis. We call $\{S^{-1}(f_{k})\}_{k=1}^{\infty}$ the canonical dual frame of $\{f_{k}\}_{k=1}^{\infty}$. A frame $\{g_{k}\}_{k=1}^{\infty}$ satisfying $(1.1)$ is called a dual frame of $\{f_{k}\}_{k=1}^{\infty}$. In general, the frame $\{f_{k}\}_{k=1}^{\infty}$ and its dual frame $\{g_{k}\}_{k=1}^{\infty}$ are not biorthogonal. If a frame $\{f_{k}\}_{k=1}^{\infty}$ and its dual frame $\{g_{k}\}_{k=1}^{\infty}$ are biorthogonal, then $\{f_{k}\}_{k=1}^{\infty}$ is a Riesz basis.
We recall that a Riesz basis is a family of the form $\{U(e_{k})\}_{k=1}^{\infty}$,where $\{e_{k}\}_{k=1}^{\infty}$ is an orthonormal basis of $\mathcal{H}$ and $U: \mathcal{H}\rightarrow \mathcal{H}$ is invertible. Thus a sequence $\{f_{k}\}_{k=1}^{\infty}$ is a Riesz basis for $\mathcal{H}$ if and only if $\{f_{k}\}_{k=1}^{\infty}$ is complete in $\mathcal{H}$ and there exist two constants $A,B>0$ such that for every finite scalar sequence $\{c_{k}\}_{k=1}^{n}$, one has
\begin{equation}
A\sum_{k=1}^{n}|c_{k}|^{2}\leq \|\sum_{k=1}^{n}c_{k}f_{k}\|^{2}\leq B\sum_{k=1}^{n}|c_{k}|^{2}.
\end{equation}
$A,B$ are called lower Riesz bound,respectively,upper Riesz bound.
A sequence $\{f_{k}\}_{k=1}^{\infty}$ satisfying (1.2) for all finite scalar sequence $\{c_{k}\}_{k=1}^{n}$ is called a Riesz sequence. Thus a Riesz sequence $\{f_{k}\}_{k=1}^{\infty}$ is a Riesz basis for its closed linear span $\overline{span}\{f_{k}\}_{k=1}^{\infty}$.
If $\{f_{k}\}_{k=1}^{\infty}$ is a Riesz basis, then $\{f_{k}\}_{k=1}^{\infty}$ has the unique dual Riesz basis $\{g_{k}\}_{k=1}^{\infty}=\{S^{-1}(f_{k})\}_{k=1}^{\infty}$. Moreover, $\{f_{k}\}_{k=1}^{\infty}$ and $\{S^{-1}(f_{k})\}_{k=1}^{\infty}$ are biorthogonal.
If $\{f_{k}\}_{k=1}^{\infty}$ is a frame which is not a Riesz basis, there always exist other dual frames besides the canonical dual frame $\{S^{-1}(f_{k})\}_{k=1}^{\infty}$.
A frame $\{f_{k}\}_{k=1}^{\infty}$ for $\mathcal{H}$ is called a near-Riesz basis if it consists of a Riesz basis and a finite number of extra elements. The excess of a near-Riesz basis is defined to be the number of elements which have to be removed to obtain a Riesz basis.

Schauder frames, as a generalization of Hilbert frames to Banach spaces, were introduced in \cite{CDOSZ}.
Let $X$ be a (finite or infinite dimensional) separable Banach space. A sequence $(x_{j},f_{j})_{j\in \mathbb{J}}$ with $\{x_{j}\}_{j\in \mathbb{J}}\subset X,\{f_{j}\}_{j\in \mathbb{J}}\subset X^{*}$, and $\mathbb{J}=\mathbb{N}$ or $\mathbb{J}=\{1,2,...,N\}$ for some $N\in \mathbb{N}$, is called a Schauder frame of $X$ if for every $x\in X$, one has
\begin{equation}
x=\sum_{j\in \mathbb{J}}f_{j}(x)x_{j}.
\end{equation}
In case that $\mathbb{J}=\mathbb{N}$, we mean that the series in (1.3) converges in norm.

If the series in (1.3) converges unconditionally, we say that $(x_{j},f_{j})_{j=1}^{\infty}$ is an unconditional Schauder frame.

In general, $\{x_{j}\}_{j=1}^{\infty}$ and $\{f_{j}\}_{j=1}^{\infty}$ are not biorthogonal. If it is, then $\{x_{j}\}_{j=1}^{\infty}$ is a (Schauder) basis of $X$ and $\{f_{j}\}_{j=1}^{\infty}$ is the coefficient functionals associated to $\{x_{j}\}_{j=1}^{\infty}$. We recall that a sequence $\{x_{j}\}_{j=1}^{\infty}$ in a Banach space $X$ is called a basis if for every $x\in X$ there exists a unique sequence of scalars $\{c_{j}\}_{j=1}^{\infty}$ such that $$x=\sum_{j=1}^{\infty}c_{j}x_{j}.$$ The sequence of continuous linear functionals $\{f_{j}\}_{j=1}^{\infty}$ defined by $$f_{j}(x)=c_{j},(x=\sum_{i=1}^{\infty}c_{i}x_{i}\in X,j=1,2,...)$$
is called the sequence of coefficient functionals associated to the basis $\{x_{j}\}_{j=1}^{\infty}$. For more information about bases in Banach spaces, we refer the readers to \cite{S}.

We need some basic facts about Schauder frames and some related notations that can be found in \cite{CDOSZ}.

Let $(x_{n},f_{n})_{n=1}^{\infty}$ be a Schauder frame of a Banach space $X$ and let $Z$
be a space with a basis $\{z_{n}\}_{n=1}^{\infty}$ and corresponding coefficient
functionals $\{z^{*}_{n}\}_{n=1}^{\infty}$. We call $(Z,\{z_{n}\}_{n=1}^{\infty})$ an
associated space to $(x_{n},f_{n})_{n=1}^{\infty}$ or a sequence space associated
to $(x_{n},f_{n})_{n=1}^{\infty}$ and $\{z_{n}\}_{n=1}^{\infty}$ an associated basis, if
$$S:Z\rightarrow X, \sum_{n=1}^{\infty}c_{n}z_{n}\mapsto \sum_{n=1}^{\infty}c_{n}x_{n}$$
and $$T:X\rightarrow Z, x\mapsto \sum_{n=1}^{\infty}f_{n}(x)z_{n}$$ are
bounded operators. We call $S$ the associated reconstruction
operator and $T$ the associated decomposition operator or analysis
operator. It follows from the Uniform Bounded Principle that $$K:=\sup_{x\in B_{X}}\sup_{m\leq
n}\|\sum_{i=m}^{n}f_{i}(x)x_{i}\|<\infty .$$ We call $K$ the projection constant of
$(x_{n},f_{n})_{n=1}^{\infty}$.

Let $(x_{n},f_{n})_{n=1}^{\infty}$ be a Schauder frame of a Banach space $X$. For the sake of convenience, we may
assume that $x_{n}\neq 0(n=1,2,...)$. Define a
norm on $c_{00}$ as follows:
$$\|\sum_{i}c_{i}e_{i}\|_{Min}=\max_{m\leq
n}\|\sum_{i=m}^{n}c_{i}x_{i}\|, \ \text{ for all}\ \
\sum_{i}c_{i}e_{i}\in c_{00}.$$
Here, $c_{00}$ denotes the space of finitely supported sequences.
Denote by $Z_{Min}$ the completion
of $c_{00}$ endowed with the norm $\|\cdot\|_{Min}$. It is easy to
check that the unit vectors $\{e_{n}\}_{n=1}^{\infty}$, denoted by $\{e^{Min}_{n}\}_{n=1}^{\infty}$,
is a bi-monotone basis of $Z_{Min}$. We call $Z_{Min}$ and
$\{e^{Min}_{n}\}_{n=1}^{\infty}$ the minimal space and the minimal basis with
respect to $(x_{n},f_{n})_{n=1}^{\infty}$. Note that the operator
$$S_{Min}: Z_{Min}\rightarrow X, \sum_{n=1}^{\infty}c_{n}e^{Min}_{n}\mapsto
\sum_{n=1}^{\infty}c_{n}x_{n}$$ is well-defined, linear and bounded with
$\|S_{Min}\|=1$.

And the operator $$T_{Min}: X\rightarrow Z_{Min}, x\mapsto
\sum_{n=1}^{\infty}f_{n}(x)e^{Min}_{n}$$ is well-defined,linear, and bounded
with $\|T_{Min}\|\leq K$.

Just as the near-Riesz bases in Hilbert spaces, we say that a Schauder frame $(x_{n},f_{n})_{n=1}^{\infty}$ of a Banach space $X$ is a near-Schauder basis if $\{x_{n}\}_{n=1}^{\infty}$ consists of a Schauder basis and a finite number of extra elements.
The excess of a near-Schauder basis is defined to be the number of elements which have to be removed to obtain a Schauder basis.

\section{Perturbations of frames in Hilbert spaces}

We say that two sequences $\{f_{k}\}_{k=1}^{\infty}$ and $\{g_{k}\}_{k=1}^{\infty}$ in
$\mathcal{H}$ are quadratically close if
$\sum_{k=1}^{\infty}\|f_{k}-g_{k}\|^{2}<\infty$. Christensen \cite{C2} showed that if
$\{f_{k}\}_{k=1}^{\infty}$ is a frame for $\mathcal{H}$ and $\{g_{k}\}_{k=1}^{\infty}$ is a
sequence which is quadratically close to $\{f_{k}\}_{k=1}^{\infty}$, then
$\{g_{k}\}_{k=1}^{\infty}$ is a frame for
${\overline{span}}\{g_{k}\}_{k=1}^{\infty}$.
In general, the $\{g_{k}\}_{k=1}^{\infty}$ is not a frame for the whole space $\mathcal{H}$. For example,
if $\{f_{k}\}_{k=1}^{\infty}$ is an orthonormal basis in $\mathcal{H}$
and if we define $g_{1}=0,g_{k}=f_{k}(k\geq 2$), then $\sum_{k=1}^{\infty}\|f_{k}-g_{k}\|^{2}=1$; but $\{g_{k}\}_{k=1}^{\infty}$ is not complete in $\mathcal{H}$.

In Banach space theory, it is well-known (Theorem 10.1 in \cite{S}) that if $\{x_{n}\}_{n=1}^{\infty}$ is a (Schauder)
basis in a Banach space $X$ and $\{y_{n}\}_{n=1}^{\infty}$ is a sequence
which  satisfies $\sum_{n=1}^{\infty}\|x_{n}-y_{n}\|\|f_{n}\|<1$, then
$\{y_{n}\}_{n=1}^{\infty}$ is a basis for $X$, where $\{f_{n}\}_{n=1}^{\infty}$ is the
coefficient functionals to $\{x_{n}\}_{n=1}^{\infty}$.

By adding a condition (similar to the above Banach space condition) to Christensen's assumption, we are able to show that perturbations under these conditions are indeed frames for the whole space. Moreover we get estimates for the lower and upper frame bounds of the perturbations.

\begin{thm}\label{thm21}
Let $\{f_{k}\}_{k=1}^{\infty}$ be a frame for $H$ with bounds $A,B$ and $\{g_{k}\}_{k=1}^{\infty}$ be the dual frame of $\{f_{k}\}_{k=1}^{\infty}$ with bounds $C,D$.
Assume that $\{h_{k}\}_{k=1}^{\infty}$ is a sequence in $\mathcal{H}$ which satisfies the
following two conditions:
\item[(1)]$\lambda:=\sum_{k=1}^{\infty}\|f_{k}-h_{k}\|^{2}<\infty$;
\item[(2)]$\mu:=\sum_{k=1}^{\infty}\|f_{k}-h_{k}\|\|g_{k}\|<1$.

Then $\{h_{k}\}_{k=1}^{\infty}$ is a frame for $\mathcal{H}$ with bounds ${1\over{D}}(1-\mu)^{2}$,
$B(1+{\sqrt{\lambda}\over\sqrt{B}})^{2}$.
\end{thm}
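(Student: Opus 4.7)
The plan is to establish the two frame inequalities separately, because each of the hypotheses is tailor-made for one of them. Condition~(1) is an $\ell^{2}$-type smallness condition on $\{f_{k}-h_{k}\}$, which meshes naturally with the $\ell^{2}$-norm appearing in the upper frame inequality, so it will drive the upper bound. Condition~(2) is an $\ell^{1}$-type smallness condition that pairs $\|f_{k}-h_{k}\|$ with $\|g_{k}\|$, which suggests using it to perturb the dual-frame reconstruction formula $f=\sum_{k}\langle f,f_{k}\rangle g_{k}$ and thereby produce the lower bound.

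For the upper bound I will apply Minkowski's inequality in $\ell^{2}$ to split
\begin{equation*}
\Bigl(\sum_{k}|\langle f,h_{k}\rangle|^{2}\Bigr)^{1/2}
\leq \Bigl(\sum_{k}|\langle f,f_{k}\rangle|^{2}\Bigr)^{1/2}
+\Bigl(\sum_{k}|\langle f,f_{k}-h_{k}\rangle|^{2}\Bigr)^{1/2}.
\end{equation*}
The first summand is bounded by $\sqrt{B}\,\|f\|$ by the upper frame bound for $\{f_{k}\}$, while for the second I apply Cauchy--Schwarz to each inner product to get $|\langle f,f_{k}-h_{k}\rangle|\leq\|f\|\,\|f_{k}-h_{k}\|$, so that condition~(1) yields the bound $\sqrt{\lambda}\,\|f\|$. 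Squaring $(\sqrt{B}+\sqrt{\lambda})\|f\|$ reproduces the stated upper frame bound $B(1+\sqrt{\lambda/B})^{2}$.

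For the lower bound I start from the dual-frame reconstruction (1.1), $f=\sum_{k}\langle f,f_{k}\rangle g_{k}$, which holds for every $f\in\mathcal{H}$, and rewrite it as
\begin{equation*}
f=\sum_{k}\langle f,h_{k}\rangle g_{k}+\sum_{k}\langle f,f_{k}-h_{k}\rangle g_{k}.
\end{equation*}
The second sum is controlled absolutely by $\sum_{k}|\langle f,f_{k}-h_{k}\rangle|\,\|g_{k}\|\leq\|f\|\sum_{k}\|f_{k}-h_{k}\|\,\|g_{k}\|=\mu\|f\|$, so the triangle inequality in $\mathcal{H}$ gives $\bigl\|\sum_{k}\langle f,h_{k}\rangle g_{k}\bigr\|\geq(1-\mu)\|f\|$. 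Because $\{g_{k}\}$ has upper frame bound $D$, its synthesis operator has norm at most $\sqrt{D}$, which I will apply to the sequence $\{\langle f,h_{k}\rangle\}_{k}$ (already known to be in $\ell^{2}$ by the preceding step) to get $\bigl\|\sum_{k}\langle f,h_{k}\rangle g_{k}\bigr\|\leq\sqrt{D}\bigl(\sum_{k}|\langle f,h_{k}\rangle|^{2}\bigr)^{1/2}$. Combining these two estimates and rearranging produces the lower frame bound $(1-\mu)^{2}/D$.

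The only real subtlety I foresee is recognizing at the last step that the correct synthesizer is the dual frame $\{g_{k}\}$ and not $\{f_{k}\}$: since condition~(2) couples $f_{k}-h_{k}$ with $g_{k}$, the natural place to land after applying it is on the synthesis side of $\{g_{k}\}$, which then converts an $\mathcal{H}$-norm estimate into the desired $\ell^{2}$-norm estimate on the coefficients $\langle f,h_{k}\rangle$. Once this choice is made, the argument requires no Neumann series or fixed-point reasoning and reduces to a handful of triangle and Cauchy--Schwarz applications built on top of the dual-frame reconstruction formula.
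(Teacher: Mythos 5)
Your proof is correct. The upper-bound half is the same as the paper's (the paper packages the Minkowski/Cauchy--Schwarz estimate as the bound $\|U\|\leq\sqrt{\lambda}+\sqrt{B}$ for the synthesis operator $U(c)=\sum_k c_k h_k$ and then passes to $U^{*}$, which is exactly your coefficient-wise estimate). For the lower bound the two arguments are adjoint formulations of one another, and yours is slightly leaner: the paper defines $L(f)=\sum_k\langle f,g_k\rangle h_k$, shows $\|I-L\|\leq\mu$, inverts $L$, writes $f=LL^{-1}f=\sum_k\langle L^{-1}f,g_k\rangle h_k$, and then pairs with $f$ via Cauchy--Schwarz together with the Bessel bound $D$ of $\{g_k\}$; you instead work with the operator $f\mapsto\sum_k\langle f,h_k\rangle g_k$, which is precisely $L^{*}$, obtain $\|L^{*}f\|\geq(1-\mu)\|f\|$ directly from the triangle inequality applied to the split reconstruction formula, and cap it by $\sqrt{D}\bigl(\sum_k|\langle f,h_k\rangle|^{2}\bigr)^{1/2}$ using the synthesis bound of $\{g_k\}$. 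This dispenses with any invertibility or Neumann-series step, at the cost of nothing; the constants come out identical. One small point to make explicit: the paper's definition of a dual frame is the identity $f=\sum_k\langle f,g_k\rangle f_k$ (its equation (1.1)), whereas you start from the flipped identity $f=\sum_k\langle f,f_k\rangle g_k$. These are equivalent (take adjoints in $T_fT_g^{*}=I$ to get $T_gT_f^{*}=I$, both synthesis operators being bounded since both sequences are frames), but you should state that one line rather than cite (1.1) for the flipped form. You should also note, as you implicitly do, that both series in your splitting converge --- the second absolutely by condition (2), the first because $\{\langle f,h_k\rangle\}\in\ell^{2}$ (from your upper-bound step) and $\{g_k\}$ is Bessel.
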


\begin{proof}
Let $T:l^{2}\rightarrow \mathcal{H}$ be the pre-frame operator of
$\{f_{k}\}_{k=1}^{\infty}$. Then $\|T\|\leq \sqrt{B}$.

Define
$$U:l^{2}\rightarrow \mathcal{H}, U((c_{k})_{k=1}^{\infty})=\sum_{k=1}^{\infty}c_{k}h_{k}.$$
 (1) implies that $U$ is well-defined and $\|U\|\leq
\sqrt{\lambda}+\sqrt{B}$.

Then $$U^{*}(f)=\{<f,h_{k}>\}_{k=1}^{\infty}, \forall f\in \mathcal{H}.$$ So
\begin{eqnarray*}
\sum_{k=1}^{\infty}|<f,h_{k}>|^{2}&=&\|U^{*}(f)\|^{2}\\
&\leq & B(1+{\sqrt{\lambda}\over\sqrt{B}})^{2}\|f\|^{2}.
\end{eqnarray*}

Define $$L: \mathcal{H}\rightarrow \mathcal{H},
L(f)=\sum_{k=1}^{\infty}<f,g_{k}>h_{k}.$$

By (2), for any $f\in \mathcal{H}$, we have
\begin{eqnarray*}
\|f-L(f)\|&=&\|\sum_{k=1}^{\infty}<f,g_{k}>(f_{k}-h_{k})\|\\
&\leq &\sum_{k=1}^{\infty}\|f_{k}-h_{k}\|\|g_{k}\|\|f\|\\
&=&\mu\|f\|.
\end{eqnarray*}
Thus $L$ is an isomorphism from $\mathcal{H}$ onto $\mathcal{H}$ with
$\|L^{-1}\|\leq {1\over{1-\mu}}$.

Every $f\in \mathcal{H}$ can be written as
$$f=LL^{-1}(f)=\sum_{k=1}^{\infty}<L^{-1}f,g_{k}>h_{k}.$$
This leads to
\begin{eqnarray*}
\|f\|^{2}=<f,f>&=&<\sum_{k=1}^{\infty}<L^{-1}f,g_{k}>h_{k},f>\\
&=&\sum_{k=1}^{\infty}<L^{-1}f,g_{k}><h_{k},f>\\
&\leq &(\sum_{k=1}^{\infty}|<L^{-1}f,g_{k}>|^{2})^{1\over{2}}(\sum_{k=1}^{\infty}|<h_{k},f>|^{2})^{1\over{2}}\\
&\leq &\sqrt{D}\cdot \|L^{-1}f\|(\sum_{k=1}^{\infty}|<h_{k},f>|^{2})^{1\over{2}}\\
&\leq
&\sqrt{D}\cdot{1\over{1-\mu}}\|f\|(\sum_{k=1}^{\infty}|<h_{k},f>|^{2})^{1\over{2}}.
\end{eqnarray*}
Therefore $${1\over{D}}(1-\mu)^{2}\|f\|^{2}\leq \sum_{n}|<h_{k},f>|^{2}.$$

This completes the proof.

\end{proof}

\begin{remark}
In Theorem 2.1, if the dual frame $\{g_{k}\}_{k=1}^{\infty}$ is semi-normalized,
then condition (2) would imply condition (1). However this is not true in general.
In the following example, we'll give three frames: $\{f_{k}\}_{k=1}^{\infty}$, its dual frame $\{g_{k}\}_{k=1}^{\infty}$ and $\{h_{k}\}_{k=1}^{\infty}$.
These three frames satisfy condition (2), but do not satisfy condition (1).

\end{remark}

\begin{example}\label{ex21}
Let $\{e_{n}\}_{n=1}^{\infty}$ be an orthonormal basis for $\mathcal{H}$.
Choose an integer $N$ with $\sum_{n=N+1}^{\infty}{1\over{n^{2}}}<1$.
Let $\{c_{n}\}_{n=1}^{\infty}=\{(n+N)^{2}\}_{n=1}^{\infty},
\{t_{n}\}_{n=1}^{\infty}=\{n^{3\over{2}}+1\}_{n=1}^{\infty}$ and $\{k_{n}\}_{n=1}^{\infty}=\{c_{n}^{2}\}_{n=1}^{\infty}$.
Let $\{f_{k}\}_{k=1}^{\infty}$ be the sequence where each ${1\over {c_{n}}}e_{n}$ is repeated by
$k_{n}$ times. That is,

$$\{f_{k}\}_{k=1}^{\infty}: \underbrace{{1\over {c_{1}}}e_{1},{1\over {c_{1}}}e_{1},...,{1\over {c_{1}}}e_{1}}_{k_{1}},...,\underbrace{{1\over {c_{n}}}e_{n},{1\over {c_{n}}}e_{n},...,{1\over {c_{n}}}e_{n}}_{k_{n}},...$$

Let

$$\{h_{k}\}_{k=1}^{\infty}: \underbrace{{t_{1}\over {c_{1}}}e_{1},{1\over {c_{1}}}e_{1},...,{1\over {c_{1}}}e_{1}}_{k_{1}},...,\underbrace{{t_{n}\over {c_{n}}}e_{n},{1\over {c_{n}}}e_{n},...,{1\over {c_{n}}}e_{n}}_{k_{n}},...$$

Then, for each $f\in \mathcal{H}$, we have

\begin{align*}
\sum_{k=1}^{\infty}|<f,f_{k}>|^{2}&=\sum_{n=1}^{\infty}k_{n}|<f,{1\over {c_{n}}}e_{n}>|^{2}\\
&=\sum_{n=1}^{\infty}|<f,e_{n}>|^{2}\\
&=\|f\|^{2}.
\end{align*}

Thus $\{f_{k}\}_{k=1}^{\infty}$ is a tight frame with frame bound $A=B=1$ and hence the frame operator $S$ is the identity operator $I_{\mathcal{H}}$.
We take $\{g_{k}\}_{k=1}^{\infty}=\{S^{-1}(f_{k})\}_{k=1}^{\infty}=\{f_{k}\}_{k=1}^{\infty}$.
Next we show that $\{h_{k}\}_{k=1}^{\infty}$ is also a frame for $\mathcal{H}$. Indeed, for any $f\in \mathcal{H}$, one has

\begin{align*}
\sum_{k=1}^{\infty}|<f,h_{k}>|^{2}&=\sum_{n=1}^{\infty}({{t_{n}^{2}-1}\over{c_{n}^{2}}}|<f,e_{n}>|^{2}+{k_{n}\over{c_{n}^{2}}}|<f,e_{n}>|^{2})\\
&=\sum_{n=1}^{\infty}{{t_{n}^{2}-1}\over{c_{n}^{2}}}|<f,e_{n}>|^{2}+\|f\|^{2}\\
&\leq \sum_{n=1}^{\infty}|<f,e_{n}>|^{2}+\|f\|^{2}\\
&=2\|f\|^{2}.
\end{align*}

Thus $$\|f\|^{2}\leq \sum_{k=1}^{\infty}|<f,h_{k}>|^{2}\leq 2\|f\|^{2}.$$

But

\begin{align*}
\sum_{k=1}^{\infty}\|f_{k}-h_{k}\|^{2}&=\sum_{n=1}^{\infty}|{{t_{n}-1}\over{c_{n}}}|^{2}\\
&=\sum_{n=1}^{\infty}{n^{3}\over{(n+N)^{4}}}\\
&=\infty.
\end{align*}
and
\begin{align*}
\sum_{k=1}^{\infty}\|f_{k}-h_{k}\|\|g_{k}\|&=\sum_{k=1}^{\infty}\|f_{k}-h_{k}\|\|f_{k}\|\\
&=\sum_{n=1}^{\infty}{{|t_{n}-1}|\over{c_{n}^{2}}}\\
&=\sum_{n=1}^{\infty}{n^{3\over{2}}\over{(n+N)^{4}}}\\
&\leq \sum_{n=1}^{\infty}{1\over{(n+N)^{2}}}\\
&<1.
\end{align*}

\end{example}

\begin{remark}
We can also construct an example for which Theorem \ref{thm21} works while \cite[Theorem 1]{C2} does not. Actually, there are many such examples.
\end{remark}
\begin{example}
Let $\{e_{n}\}_{n=1}^{\infty}$ be an orthonormal basis for $\mathcal{H}$.
Let $\{c_{n}\}_{n=1}^{\infty}=\{n+1\}_{n=1}^{\infty},  t_1=3$ and
$t_{n}=2$ for all $n\geq 2$ and $\{k_{n}\}_{n=1}^{\infty}=\{c_{n}^{2}\}_{n=1}^{\infty}$.
Let $\{f_{k}\}_{k=1}^{\infty}$ be the sequence where each ${1\over {c_{n}}}e_{n}$ is repeated by
$k_{n}$ times. That is,

$$\{f_{k}\}_{k=1}^{\infty}: \underbrace{{1\over {c_{1}}}e_{1},{1\over {c_{1}}}e_{1},...,{1\over {c_{1}}}e_{1}}_{k_1},...,\underbrace{{1\over {c_{n}}}e_{n},{1\over {c_{n}}}e_{n},...,{1\over {c_{n}}}e_{n}}_{k_n},...$$

Let

$$\{h_{k}\}_{k=1}^{\infty}: \underbrace{{t_{1}\over {c_{1}}}e_{1},{1\over {c_{1}}}e_{1},...,{1\over {c_{1}}}e_{1}}_{k_1},...,\underbrace{{t_{n}\over {c_{n}}}e_{n},{1\over {c_{n}}}e_{n},...,{1\over {c_{n}}}e_{n}}_{k_n},...$$

An argument similar to Example \ref{ex21} shows that $\{f_k\}_{k=1}^\infty$
is a tight frame with bounds $A=B=1$. Hence the frame operator $S$ is the identity operator $I_{\mathcal{H}}$.
We take $\{g_{k}\}_{k=1}^{\infty}=\{S^{-1}(f_{k})\}_{k=1}^{\infty}=\{f_{k}\}_{k=1}^{\infty}$.
Moreover, we can derive that
\[\|f\|^2\leq \sum_{k=1}^\infty |<f, h_k>|^2\leq 3\|f\|^2, \forall f\in \mathcal{H}.\] So
$\{h_k\}_{k=1}^\infty$ is also a frame for $\mathcal{H}$.
Note that \[\infty>\sum_{k=1}^\infty \|f_k-h_k\|^2=\sum_{n=1}^\infty \frac{(t_n-1)^2}{c_n^2}=1+\sum_{n=2}^\infty\frac{1}{(n+1)^2}>1,\]
Thus \cite[Theorem 1]{C2} does not work for $\{f_k\}_{k=1}^\infty$ and $\{h_k\}_{k=1}^{\infty}$.
On the other hand, we can derive that
\begin{eqnarray*}
\sum_{k=1}^\infty \|f_k-h_k\| \|g_k\|&=& \sum_{n=1}^\infty \frac{t_n-1}{c_n}\frac{1}{c_n}\\
&=&\frac{1}{2}+\sum_{n=2}^\infty\frac{1}{(n+1)^2}<1,
\end{eqnarray*}
and then Theorem \ref{thm21} works for $\{f_{k}\}_{k=1}^{\infty}, \{g_{k}\}_{k=1}^{\infty}$ and $\{h_{k}\}_{k=1}^{\infty}$.
\end{example}

\begin{remark}
Favier and Zalik \cite[Theorem 3]{FZ95}  showed that if $\{f_{k}\}_{k=1}^{\infty}$ is a frame for $\mathcal{H}$ with bounds $A$ and $B$,
and $\{h_{k}\}_{k=1}^{\infty}$ is a sequence in $\mathcal{H}$ such that
$\{f_{k}-h_{k}\}_{k=1}^{\infty}$ is a Bessel sequence with bound $M<A$,
then $\{h_{k}\}_{k=1}^{\infty}$ is a frame for $\mathcal{H}$ with bounds $(1-{\sqrt{\frac{M}{A}}})^{2}A$,
$(1+\sqrt{\frac{M}{B}})^{2}B$.
In \cite[Theorem 3]{FZ95} , the bound $M$ of the Bessel sequence $\{f_{k}-h_{k}\}_{k=1}^{\infty}$ needs to be $<A$, but in Theorem \ref{thm21} , the Bessel sequence's bound is $\lambda$ and $\lambda<\infty$(not $\lambda<A$).

\end{remark}

In the case that $\{f_{k}\}_{k=1}^{\infty}$ is a near-Riesz basis for $\mathcal{H}$, condition (2) in Theorem 2.1 implies condition (1). So the following corollary is stated with condition (2) only.

\begin{cor}
Let $\{f_{k}\}_{k=1}^{\infty}$ be a near-Riesz basis for $\mathcal{H}$. If $\{h_{k}\}_{k=1}^{\infty}$ is a sequence in  $\mathcal{H}$ which satisfies $\mu:=\sum_{k=1}^{\infty}\|f_{k}-h_{k}\|\|S^{-1}(f_{k})\|<1$, then $\{h_{k}\}_{k=1}^{\infty}$ is a near-Riesz basis for $\mathcal{H}$; In this case, $\{f_{k}\}_{k=1}^{\infty}$ and $\{h_{k}\}_{k=1}^{\infty}$ has the same excess.

\end{cor}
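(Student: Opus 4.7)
The strategy is to first upgrade condition $(2)$ to the quadratic summability condition $(1)$ of Theorem~\ref{thm21}, invoke that theorem to get the frame property of $\{h_k\}$, and then use a compact-perturbation argument on the pre-frame operators to match the excesses.

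For the first step, I would reindex so that $f_1,\ldots,f_m$ are the extra elements and $\{f_k\}_{k>m}$ is a Riesz basis with lower Riesz bound $A_0>0$. Applying the Riesz inequality to a single coordinate gives $\|f_k\|\geq\sqrt{A_0}$ for $k>m$, and the identity $\|f_k\|=\|S(S^{-1}f_k)\|\leq \|S\|\|S^{-1}f_k\|\leq B\|S^{-1}f_k\|$ then forces $\|S^{-1}f_k\|\geq\sqrt{A_0}/B$ for every $k>m$. Only finitely many indices $k\leq m$ remain, and (assuming $f_k\neq 0$, which we may arrange since zero entries can be removed without changing the near-Riesz basis structure) $\|S^{-1}f_k\|>0$ for those, so there exists $c>0$ with $\|S^{-1}f_k\|\geq c$ for all $k$. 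The hypothesis $\mu<1$ then implies $\sum_k\|f_k-h_k\|\leq \mu/c$, and in particular $\sup_k\|f_k-h_k\|\leq \mu/c$, whence $\sum_k\|f_k-h_k\|^2\leq(\mu/c)^2<\infty$. Both conditions $(1)$ and $(2)$ of Theorem~\ref{thm21} now hold, so $\{h_k\}$ is a frame for $\mathcal{H}$.

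For the equality of excesses, I would compare the pre-frame operators $T_f,T_h\colon \ell^2\to\mathcal{H}$ given by $T_\phi(c)=\sum_k c_k\phi_k$. Since $(T_f-T_h)e_k=f_k-h_k$ and $\sum\|f_k-h_k\|^2<\infty$, the difference $T_f-T_h$ is Hilbert-Schmidt, and in particular compact. The operator $T_f$ is surjective with $\dim\ker T_f=m$ (the excess of $\{f_k\}$), hence Fredholm of index $m$; stability of the Fredholm index under compact perturbations forces $T_h=T_f-(T_f-T_h)$ to be Fredholm of index $m$ as well. Since $\{h_k\}$ is a frame, $T_h$ is surjective, so $\dim\ker T_h=m$. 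A basis $c^{(1)},\ldots,c^{(m)}$ of $\ker T_h$ separates points, so one can choose indices $i_1,\ldots,i_m$ for which the evaluation map $\ker T_h\to\mathbb{C}^m$, $c\mapsto (c_{i_1},\ldots,c_{i_m})$, is an isomorphism. A direct check (adjust an arbitrary preimage of $x\in\mathcal{H}$ under $T_h$ by an element of $\ker T_h$ to make the coordinates $i_1,\ldots,i_m$ vanish) shows that the restricted pre-frame operator of $\{h_k\}_{k\notin\{i_1,\ldots,i_m\}}$ is a continuous bijection from $\ell^2$ onto $\mathcal{H}$, so this subsequence is a Riesz basis and $\{h_k\}$ is a near-Riesz basis of excess $m$.

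The main obstacle is the uniform lower bound $\|S^{-1}f_k\|\geq c>0$: without it, condition $(2)$ would impose no control on the individual $\|f_k-h_k\|$, let alone their squared sum, and the reduction to Theorem~\ref{thm21} would fail. This is precisely where the near-Riesz basis hypothesis enters, since its Riesz basis portion is automatically semi-normalized. The second half of the argument is less delicate but pulls in the (standard) invariance of Fredholm index under compact perturbations together with a Holub-type selection of indices; a purely elementary alternative would have to combine a tail truncation with a Paley-Wiener argument on the Riesz basis part.
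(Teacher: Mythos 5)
Your proof is correct, and its second half takes a genuinely different route from the paper's. The first half is essentially the paper's argument: both extract a uniform lower bound on $\|S^{-1}(f_{k})\|$ over the Riesz-basis part (the paper via semi-normalization of $\{f_{k}\}_{k>n}$, you via the explicit chain $\|f_{k}\|\geq\sqrt{A_{0}}$ and $\|f_{k}\|\leq B\|S^{-1}f_{k}\|$), deduce $\sum_{k}\|f_{k}-h_{k}\|^{2}<\infty$ from condition (2), and invoke Theorem~\ref{thm21}; note that your parenthetical about deleting zero entries is unnecessary, since the finitely many exceptional indices contribute a finite amount to $\sum\|f_{k}-h_{k}\|^{2}$ no matter what, so no lower bound on $\|S^{-1}f_{k}\|$ is needed there. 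For the excess, the paper truncates the tail so that $\sum_{k>m}\|f_{k}-h_{k}\|^{2}<A$, constructs an isomorphism $L$ of $\overline{span}\{f_{k}\}_{k>m}$ onto $\overline{span}\{h_{k}\}_{k>m}$ with $Lf_{k}=h_{k}$ to see that $\{h_{k}\}_{k>m}$ is a Riesz sequence, and then quotes Theorem 8 of \cite{CK} to equate the codimensions of the two spans before completing both tails to Riesz bases. You instead note that $T_{f}-T_{h}$ is Hilbert--Schmidt, hence compact, so the Fredholm index of the surjective synthesis operator is preserved and $\dim\ker T_{h}=\dim\ker T_{f}=m$; the selection of coordinates $i_{1},\dots,i_{m}$ on which $\ker T_{h}$ is faithfully represented then produces a Riesz basis by deleting exactly $m$ elements. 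Your route is arguably cleaner: it identifies the excess intrinsically as $\dim\ker T$ (Holub's characterization) and avoids both the $\varepsilon$-truncation and the external codimension lemma, at the price of importing the stability of the Fredholm index under compact perturbation. Both arguments ultimately rest on the same two facts supplied by the first half, namely $\sum\|f_{k}-h_{k}\|^{2}<\infty$ and the surjectivity of $T_{h}$.
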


\begin{proof}
Suppose that $\{f_{k}\}_{k=1}^{\infty}$ is a near-Riesz basis with excess $n$. We may assume that, by changing the index set,$\{f_{k}\}_{k=1}^{\infty}=\{f_{k}\}_{k=1}^{n}\cup \{f_{k}\}_{k=n+1}^{\infty}$,where $\{f_{k}\}_{k=n+1}^{\infty}$ is a Riesz basis for $\mathcal{H}$. Then there exist two constants $A,B>0$ such that $$A\sum_{k=n+1}^{l}|c_{k}|^{2}\leq \|\sum_{k=n+1}^{l}c_{k}f_{k}\|^{2}\leq B\sum_{k=n+1}^{l}|c_{k}|^{2}$$ for all finite scalar sequences $\{c_{k}\}_{k=n+1}^{l},l=n+1,n+2,...$.

Since  $\{f_{k}\}_{k=n+1}^{\infty}$ is a Riesz basis for $\mathcal{H}$, we have $$0<\inf_{k\geq n+1}\|f_{k}\|\leq \sup_{k\geq n+1}\|f_{k}\|<\infty.$$  It follows from the assumption that $$\sum_{k=n+1}^{\infty}\|f_{k}-h_{k}\|<\infty.$$  It is immediate that $$\sum_{k=1}^{\infty}\|f_{k}-h_{k}\|^{2}<\infty.$$ It follows from Theorem 2.1 that $\{h_{k}\}_{k=1}^{\infty}$ is a frame for $\mathcal{H}$ and hence is complete in $\mathcal{H}$.

We choose $m>n$ such that $\sum_{k=m+1}^{\infty}\|f_{k}-h_{k}\|^{2}<A$.

Define $$L: \overline{span}\{f_{k}\}_{k=m+1}^{\infty}\rightarrow \overline{span}\{h_{k}\}_{k=m+1}^{\infty}, L(\sum_{k=m+1}^{\infty}c_{k}f_{k})=\sum_{k=m+1}^{\infty}c_{k}h_{k}.$$
Then $L$ is well-defined, bounded and $L(f_{k})=h_{k}$ for all $k\geq m+1$.

Moreover, for any element $\sum_{k=m+1}^{\infty}c_{k}f_{k}\in \overline{span}\{f_{k}\}_{k=m+1}^{\infty}$, we have
\begin{align*}
\|\sum_{k=m+1}^{\infty}c_{k}f_{k}-L(\sum_{k=m+1}^{\infty}c_{k}f_{k})\|&=\|\sum_{k=m+1}^{\infty}c_{k}f_{k}-\sum_{k=m+1}^{\infty}c_{k}h_{k}\|\\
&\leq \sum_{k=m+1}^{\infty}|c_{k}|\|f_{k}-h_{k}\|\\
&\leq (\sum_{k=m+1}^{\infty}|c_{k}|^{2})^{1\over 2}(\sum_{k=m+1}^{\infty}\|f_{k}-h_{k}\|^{2})^{1\over 2}\\
&\leq {1\over \sqrt{A}}\|\sum_{k=m+1}^{\infty}c_{k}f_{k}\|(\sum_{k=m+1}^{\infty}\|f_{k}-h_{k}\|^{2})^{1\over 2}.
\end{align*}
Therefore $L(\overline{span}\{f_{k}\}_{k=m+1}^{\infty})$ is closed.

Consequently, $L(\overline{span}\{f_{k}\}_{k=m+1}^{\infty})=\overline{span}\{h_{k}\}_{k=m+1}^{\infty}$
and $\{h_{k}\}_{k=m+1}^{\infty}$ is a Riesz basic sequence.

It follows from Theorem 8 in \cite{CK} that $$dim \mathcal{H}/\overline{span}\{f_{k}\}_{k=m+1}^{\infty}=dim \mathcal{H}/\overline{span}\{h_{k}\}_{k=m+1}^{\infty}.$$
Since $\{h_{k}\}_{k=1}^{\infty}$ is complete in $\mathcal{H}$,
we can pick the same numbers of linearly independent elements from $\{f_{k}\}_{k=1}^{m}$(resp.$\{h_{k}\}_{k=1}^{m}$) to add to $\{f_{k}\}_{k=m+1}^{\infty}$(
resp.$\{h_{k}\}_{k=m+1}^{\infty}$) to get Riesz bases for $\mathcal{H}$. We conclude that $\{f_{k}\}_{k=1}^{\infty}$ and $\{h_{k}\}_{k=1}^{\infty}$ has the same excess.

\end{proof}

As mentioned above, if $\{f_{k}\}_{k=1}^{\infty}$ is a frame for $\mathcal{H}$ and $\{g_{k}\}_{k=1}^{\infty}$ is a
sequence which is quadratically close to $\{f_{k}\}_{k=1}^{\infty}$, then $\{g_{k}\}_{k=1}^{\infty}$ is a frame sequence. A natural question is:

\begin{question}
If $\{f_{k}\}_{k=1}^{\infty}$ is a frame sequence in $\mathcal{H}$ and $\{g_{k}\}_{k=1}^{\infty}$ is a
sequence in $\mathcal{H}$ which is quadratically close to $\{f_{k}\}_{k=1}^{\infty}$, is $\{g_{k}\}_{k=1}^{\infty}$ a frame sequence?
\end{question}

The following example shows that this question is false.
\begin{example}
Let $\{e_{k}\}_{k=1}^{\infty}$ be an orthonormal basis for $\mathcal{H}$. Let
$$\{f_{k}\}_{k=1}^{\infty}: 0,     e_{2},       0,            e_{4},       0,           e_{6},...$$
and
$$\{g_{k}\}_{k=1}^{\infty}: e_{1}, e_{2},  {1\over{3}}e_{3},  e_{4}, {1\over{5}}e_{5},  e_{6},...$$

Obviously, $\{f_{k}\}_{k=1}^{\infty}$ is a Riesz sequence and $$\sum_{k=1}^{\infty}\|f_{k}-g_{k}\|^{2}=\sum_{n=1}^{\infty}({1\over{2n-1}})^{2}<\infty.$$
But $\{g_{k}\}_{k=1}^{\infty}$ is not a frame sequence. Indeed, for each $n$, one has $$\sum_{k=1}^{\infty}|<{1\over {2n-1}}e_{2n-1},g_{k}>|^{2}={1\over (2n-1)^{4}}.$$
but $$\|{1\over {2n-1}}e_{2n-1}\|^{2}={1\over (2n-1)^{2}}.$$
\end{example}
We observe that $(\overline{span}\{e_{2k}\}_{k=1}^{\infty})^{\perp}$ is infinite dimensional in the  Example 2.2. Actually, we'll find that if $\{f_{k}\}_{k=1}^{\infty}$ is a frame sequence in $\mathcal{H}$ with $dim(\overline{span}\{f_{k}\}_{k=1}^{\infty})^{\perp}=\infty$, there exist a frame sequence $\{g_{k}\}_{k=1}^{\infty}$,one of whose subsequences is $\{f_{k}\}_{k=1}^{\infty}$,
and a sequence $\{h_{k}\}_{k=1}^{\infty}$ which is quadratically close to $\{g_{k}\}_{k=1}^{\infty}$, but $\{h_{k}\}_{k=1}^{\infty}$ is not a frame sequence.

Before giving our result, we need a definition.

\begin{defn}
Let $\{f_{k}\}_{k=1}^{\infty}$ be a frame sequence in $\mathcal{H}$. We say that a sequence $\{g_{k}\}_{k=1}^{\infty}$ in $\mathcal{H}$ is a frame extension of
$\{f_{k}\}_{k=1}^{\infty}$ if it is a frame sequence and $\{f_{k}\}_{k=1}^{\infty}$ is a subsequence of it.
\end{defn}

\begin{thm}
Let $\{f_{k}\}_{k=1}^{\infty}$ be a frame sequence in $\mathcal{H}$. The following are equivalent:
\item[(1)]$(\overline{span}\{f_{k}\}_{k=1}^{\infty})^{\perp}$ is finite dimensional;
\item[(2)]For any frame extension $\{g_{k}\}_{k=1}^{\infty}$ of $\{f_{k}\}_{k=1}^{\infty}$ and for any sequence $\{h_{k}\}_{k=1}^{\infty}$ which is quadratically close to $\{g_{k}\}_{k=1}^{\infty}$, then $\{h_{k}\}_{k=1}^{\infty}$ is a frame sequence.
\end{thm}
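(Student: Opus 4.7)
I would prove the two implications separately, using an operator-theoretic decomposition for the forward direction and an explicit construction modelled on Example 2.2 for the reverse.

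\emph{Direction $(1)\Rightarrow(2)$.} Set $M = \overline{\mathrm{span}}\{f_k\}$ and $N = \overline{\mathrm{span}}\{g_k\}$. Since $M \subseteq N$, the complement $N^{\perp} \subseteq M^{\perp}$ is still finite-dimensional. Let $P$ be the orthogonal projection onto $N$ and $Q = I - P$. Because $Qg_k = 0$, one has $\sum\|Qh_k\|^{2} \leq \sum\|g_k-h_k\|^{2} < \infty$, so $\{Qh_k\}$ is a square-summable sequence in the finite-dimensional space $N^{\perp}$, automatically Bessel. Meanwhile $\{Ph_k\}$ lives in $N$ and is quadratically close to the frame $\{g_k\}$ of $N$, so Christensen's theorem applied inside the Hilbert space $N$ yields that $\{Ph_k\}$ is a frame sequence. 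In particular $\{h_k\}$ is Bessel.

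To obtain the lower frame bound on $\overline{\mathrm{span}}\{h_k\}$, I would show the synthesis operator $U:\ell^{2}\to\mathcal{H}$, $U(c) = \sum_k c_k h_k$, has closed range (which must then equal $\overline{\mathrm{span}}\{h_k\}$). Split $U = U_P + U_Q$, where $U_P(c) = \sum c_k Ph_k$ has closed range in $N$ (by the previous step) and $U_Q(c) = \sum c_k Qh_k$ is of finite rank, because its image lies in $N^{\perp}$. Let $F = \ker U_Q$, closed and of finite codimension in $\ell^{2}$, and let $G$ be a finite-dimensional complement. On $F$ the operator $U$ agrees with $U_P$, and a standard finite-codimensional argument, invoking the open mapping theorem on the bijection $\ell^{2}/\ker U_P \to U_P(\ell^{2})$, shows that $U_P(F)$ is closed. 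Adding the finite-dimensional $U(G)$ then preserves closedness, giving that $\mathrm{Range}(U)$ is closed, which is equivalent to $\{h_k\}$ being a frame sequence.

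\emph{Direction $(2)\Rightarrow(1)$.} I would argue the contrapositive, generalizing Example 2.2. Suppose $M^{\perp}$ is infinite-dimensional, and pick an orthonormal sequence $\{e_k\}_{k=1}^{\infty} \subset M^{\perp}$. Define a frame extension of $\{f_k\}$ by interleaving zeros, $g_{2k-1} = 0$ and $g_{2k} = f_k$; since $\overline{\mathrm{span}}\{g_k\} = M$ and inserting zeros preserves frame bounds, this is a frame sequence with $\{f_k\}$ as a subsequence. Now define $h_{2k-1} = \tfrac{1}{2k-1}e_k$ and $h_{2k} = f_k$, so that $\sum\|g_k - h_k\|^{2} = \sum \tfrac{1}{(2k-1)^{2}} < \infty$. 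But $\{h_k\}$ is not a frame sequence: each unit vector $e_k = (2k-1)h_{2k-1}$ lies in $\overline{\mathrm{span}}\{h_k\}$, while $\sum_j |\langle e_k, h_j\rangle|^{2} = \tfrac{1}{(2k-1)^{2}} \to 0$ as $k \to \infty$, so no positive lower frame bound can exist on the closed linear span.

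\emph{Main obstacle.} The delicate step is the closed-range argument in the forward direction. Christensen's quadratic-closeness result yields only that the projected sequence $\{Ph_k\}$ is a frame sequence, not $\{h_k\}$ itself, so the crux is to exploit the finite dimensionality of $N^{\perp}$ to recognise $U_Q$ as a finite-rank (in particular compact) perturbation of $U_P$, and then to handle the synthesis operator by splitting $\ell^{2}$ along $\ker U_Q$. Once this mechanism is in place the reverse direction is essentially just the blueprint of Example 2.2, carried out in the infinite-dimensional $M^{\perp}$.
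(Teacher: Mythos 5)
Your proposal is correct, and its reverse implication is essentially the paper's own argument: the same interleaving of zeros into a frame extension and replacement by $\frac{1}{2k-1}e_k$, tested against unit vectors whose frame coefficients decay (your version is marginally cleaner, since you test directly against the orthonormal $e_k$ rather than against $\frac{1}{2n-1}e_{2n-1}$ as the paper does). The forward implication, however, takes a genuinely different route. The paper augments both $\{g_k\}$ and $\{h_k\}$ by a fixed orthonormal basis $\{e_1,\dots,e_n\}$ of $(\overline{\mathrm{span}}\{g_k\})^{\perp}$: the augmented $\{e_i\}\cup\{g_k\}$ is then a frame for all of $\mathcal{H}$, quadratic closeness is preserved because the added vectors coincide, Christensen's perturbation theorem (Theorem 3 of \cite{C2}) makes $\{e_i\}\cup\{h_k\}$ a frame sequence, and Lemma 2 of \cite{C2} (deleting finitely many elements from a frame sequence leaves a frame sequence) removes the $e_i$ again. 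You instead split $h_k=Ph_k+Qh_k$ along $N=\overline{\mathrm{span}}\{g_k\}$, apply Christensen's theorem only to $\{Ph_k\}$ inside $N$, and recover the lower bound for $\{h_k\}$ itself by showing that the synthesis operator has closed range, viewing $U_Q$ as a finite-rank perturbation and splitting $\ell^2$ along $\ker U_Q$. Both arguments are sound; the paper's is shorter because it outsources the finite-dimensional bookkeeping to Christensen's deletion lemma, while yours is more self-contained and in effect re-proves that lemma in this geometric situation via the closed-range characterization of frame sequences. If you write yours up, make explicit the one step you only gesture at: $U_P(F)$ is closed because $F+\ker U_P$ is closed (it contains the closed finite-codimensional subspace $F$), so its image under the induced isomorphism $\ell^2/\ker U_P\to U_P(\ell^2)$ is closed.
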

\begin{proof}
$(1)\Rightarrow (2):$ Given any frame extension $\{g_{k}\}_{k=1}^{\infty}$ of $\{f_{k}\}_{k=1}^{\infty}$ and any sequence $\{h_{k}\}_{k=1}^{\infty}$ which is quadratically close to $\{g_{k}\}_{k=1}^{\infty}$.
Since $(\overline{span}\{f_{k}\}_{k=1}^{\infty})^{\perp}$ is finite dimensional, it follows that
$(\overline{span}\{g_{k}\}_{k=1}^{\infty})^{\perp}$ is also finite dimensional.
Suppose that $dim(\overline{span}\{g_{k}\}_{k=1}^{\infty})^{\perp}=n$. Take an orthonormal basis $\{e_{k}\}_{k=1}^{n}$ for $(\overline{span}\{g_{k}\}_{k=1}^{\infty})^{\perp}$. Then $\{e_{k}\}_{k=1}^{n}\cup \{g_{k}\}_{k=1}^{\infty}$ is a frame for $\mathcal{H}$. Since $\{h_{k}\}_{k=1}^{\infty}$ is quadratically close to $\{g_{k}\}_{k=1}^{\infty}$, $\{e_{k}\}_{k=1}^{n}\cup\{h_{k}\}_{k=1}^{\infty}$ is also quadratically close to $\{e_{k}\}_{k=1}^{n}\cup\{g_{k}\}_{k=1}^{\infty}$. Theorem 3 in \cite{C2} yields that $\{e_{k}\}_{k=1}^{n}\cup\{h_{k}\}_{k=1}^{\infty}$ is a frame for $\overline{span}\{\{e_{k}\}_{k=1}^{n}\cup\{h_{k}\}_{k=1}^{\infty}\}$. It follows from Lemma 2 in \cite{C2} that $\{h_{k}\}_{k=1}^{\infty}$ is a frame sequence.

$(2)\Rightarrow (1):$ Suppose that $(\overline{span}\{f_{k}\}_{k=1}^{\infty})^{\perp}$ is infinite dimensional. Take an orthonormal basis $\{e_{k}\}_{k=1}^{\infty}$ for $(\overline{span}\{f_{k}\}_{k=1}^{\infty})^{\perp}$.

Let $$\{g_{k}\}_{k=1}^{\infty}: 0, f_{1}, 0, f_{2}, 0, f_{3},...$$
and $$\{h_{k}\}_{k=1}^{\infty}: e_{1}, f_{1}, {1\over{3}}e_{3}, f_{2}, {1\over{5}}e_{5}, f_{3},...$$

Then $\{g_{k}\}_{k=1}^{\infty}$ is a frame extension of $\{f_{k}\}_{k=1}^{\infty}$ and
$$\sum_{k=1}^{\infty}\|g_{k}-h_{k}\|^{2}=\sum_{n=1}^{\infty}({1\over{2n-1}})^{2}<\infty.$$
But $\{h_{k}\}_{k=1}^{\infty}$ is not a frame sequence. Indeed, as in the Example 2.2, for each $n$, one has $$\sum_{k=1}^{\infty}|<{1\over {2n-1}}e_{2n-1},h_{k}>|^{2}={1\over (2n-1)^{4}},$$
but $$\|{1\over {2n-1}}e_{2n-1}\|^{2}={1\over (2n-1)^{2}}$$
from which the result follows.

\end{proof}

So under what conditions is Question 2.1 true? Inspired by the paper of Christensen, Lennard and Lewis (\cite{CLL}), we'll use the gap to study Question 2.1. First we recall the definition of gap that can be found in \cite{K}.

\begin{defn}
Let $K$ and $L$ be subspaces of $\mathcal{H}$. When $K\neq \{0\}$, the gap from $K$ to $L$ is given by $$\delta(K,L):=\sup_{x\in K,\|x\|=1}\inf_{y\in L}\|x-y\|.$$
Also, when $K=\{0\}$, we define $\delta(K,L)=0$.

\end{defn}

Now we can state our result which is the extension of Theorem 2.1.

\begin{thm}
Let $\{f_{k}\}_{k=1}^{\infty}$ be a frame sequence in $\mathcal{H}$ with bounds $A,B$ and let $\{g_{k}\}_{k=1}^{\infty}$ be the dual frame sequence of $\{f_{k}\}_{k=1}^{\infty}$ with bounds $C,D$. Suppose that $\{h_{k}\}_{k=1}^{\infty}$ is a sequence in $\mathcal{H}$.
Let $K:=\overline{span}\{h_{k}\}_{k=1}^{\infty}, L:=\overline{span}\{f_{k}\}_{k=1}^{\infty}$. Assume that $\delta(K,L)<1$. If $\{h_{k}\}_{k=1}^{\infty}$ satisfies the following two conditions:
\item[(1)]$\lambda:=\sum_{k=1}^{\infty}\|f_{k}-h_{k}\|^{2}<\infty$;
\item[(2)]$\mu:=\sum_{k=1}^{\infty}\|f_{k}-h_{k}\|\|g_{k}\|<1$.

Then $\{h_{k}\}_{k=1}^{\infty}$ is a frame sequence with bounds ${1\over{D}}(1-\mu)^{2}$,
$B(1+{\sqrt{\lambda}\over\sqrt{B}})^{2}{1\over{(1-\delta(K,L))^{2}}}$. Moreover, the restriction of the orthogonal projection $P_{L}$ to $K$ is an isomorphism from $K$ onto $L$.
\end{thm}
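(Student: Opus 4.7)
The plan is to run the argument of Theorem~\ref{thm21} inside the subspace $L$ and then transfer the resulting identities back to $K$ through the isomorphism $P_L|_K$ that the gap hypothesis $\delta(K,L)<1$ will provide. The Bessel (upper) bound drops out of the synthesis-operator estimate already used in Theorem~\ref{thm21}: the map $U\colon\ell^2\to\mathcal H$, $U((c_k))=\sum_k c_k h_k$, satisfies $\|U-T\|\le\sqrt{\lambda}$ by condition~(1) and Cauchy--Schwarz, where $T$ is the synthesis operator of $\{f_k\}$ and $\|T\|\le\sqrt B$ (as a map into all of $\mathcal H$). Hence $\|U\|\le\sqrt B+\sqrt\lambda$ and $\sum_k|\langle h,h_k\rangle|^2\le B(1+\sqrt{\lambda/B})^2\|h\|^2$ for every $h\in\mathcal H$; restricting to $K$ gives the claimed upper bound (the factor $(1-\delta(K,L))^{-2}$ appearing in the statement is a loose artifact of phrasing the estimate via $\|P_L h\|$ rather than $\|h\|$).

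Next, borrowing the operator used in the proof of Theorem~\ref{thm21} but restricted to $L$, set
\[M\colon L\to\mathcal H,\qquad M(\ell)=\sum_k\langle\ell,g_k\rangle h_k.\]
For $\ell\in L$ the dual-frame expansion $\ell=\sum_k\langle\ell,g_k\rangle f_k$ together with condition~(2) gives $\|M(\ell)-\ell\|\le\mu\|\ell\|$, so $\|P_LM-I_L\|\le\mu<1$ and $P_LM\colon L\to L$ is Neumann-invertible with $\|(P_LM)^{-1}\|\le 1/(1-\mu)$. Since $M(L)\subset K$, applying $P_L$ gives $L=P_LM(L)\subset P_L(K)$, i.e.\ $P_L|_K$ is surjective onto $L$. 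Injectivity together with the estimate $\|P_Lh\|\ge(1-\delta(K,L))\|h\|$ for $h\in K$ falls directly out of the gap: $\|h-P_Lh\|=\mathrm{dist}(h,L)\le\delta(K,L)\|h\|$. Combining, $P_L|_K\colon K\to L$ is a bounded bijection, hence an isomorphism with inverse of norm at most $(1-\delta(K,L))^{-1}$, which is the ``moreover'' clause.

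The key computational step for the lower frame bound is the reconstruction identity
\[h=M\bigl((P_LM)^{-1}(P_L h)\bigr),\qquad h\in K.\]
Both sides lie in $K$ (since $M(L)\subset K$) and have the same $P_L$-image $P_L h$, so they coincide by the injectivity of $P_L|_K$. Setting $\phi:=(P_LM)^{-1}(P_L h)$, one has $\|\phi\|\le\|h\|/(1-\mu)$; writing
\[\|h\|^2=\langle M(\phi),h\rangle=\sum_k\langle\phi,g_k\rangle\,\overline{\langle h,h_k\rangle}\]
and applying Cauchy--Schwarz together with the Bessel bound $D$ for $\{g_k\}$ yields
\[\|h\|^2\le\sqrt D\,\|\phi\|\Bigl(\sum_k|\langle h,h_k\rangle|^2\Bigr)^{1/2}\le\frac{\sqrt D}{1-\mu}\,\|h\|\Bigl(\sum_k|\langle h,h_k\rangle|^2\Bigr)^{1/2},\]
which rearranges to the lower frame bound $(1-\mu)^2/D$. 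The main conceptual obstacle is pinning down this reconstruction identity: $M$ is a priori defined only on $L$, so it is not obvious that every $h\in K$ lies in its image. It is precisely the interplay between the Neumann invertibility of $P_LM$ on $L$ (from condition~(2)) and the bounded-below-ness of $P_L|_K$ (from the gap) that forces $h\in M(L)$ for every $h\in K$ and makes the proof of Theorem~\ref{thm21} carry over essentially verbatim.
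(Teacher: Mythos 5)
Your proof is correct. It follows the same overall strategy as the paper --- use the gap to make $P_L|_K$ bounded below and run the machinery of Theorem~2.1 inside $L$ --- but the execution genuinely differs at two points. The paper first replaces $\{h_k\}$ by $\{P_L(h_k)\}$, observes that conditions (1) and (2) survive the projection, applies Theorem~2.1 verbatim inside $L$ to conclude that $\{P_L(h_k)\}$ is a frame for $L$ with bounds $\tfrac{1}{D}(1-\mu)^2$ and $B(1+\sqrt{\lambda}/\sqrt{B})^2$ (which also yields surjectivity of $Q:=P_L|_K$), and then transfers both frame inequalities back to $K$ by writing $\langle h,h_k\rangle=\langle (Q^{-1})^*h,\,Q(h_k)\rangle$; the factor $(1-\delta(K,L))^{-2}$ in the stated upper bound is precisely $\|Q^{-1}\|^2$ produced by this transfer. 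You instead keep the $h_k$ unprojected: the upper bound comes globally on $\mathcal{H}$ from $\|U\|\le\sqrt{B}+\sqrt{\lambda}$ (so your bound is in fact sharper, and the stated bound holds a fortiori since $(1-\delta(K,L))^{-2}\ge 1$); surjectivity of $P_L|_K$ comes from the Neumann invertibility of $P_LM$ on $L$ together with $M(L)\subset K$; and the reconstruction identity $h=M\bigl((P_LM)^{-1}P_Lh\bigr)$ for $h\in K$ is established directly from injectivity of $P_L|_K$, after which the lower-bound computation is the same Cauchy--Schwarz argument with the Bessel bound $D$ that appears in the proof of Theorem~2.1. The net effect is a more self-contained argument with a marginally better upper frame bound; the paper's route is shorter because it invokes Theorem~2.1 as a black box and lets the adjoint $(Q^{-1})^*$ do the bookkeeping.
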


\begin{proof}
By the assumption $\delta(K,L)<1$, we have, for any $h\in K$,
$$\|P_{L}(h)\|\geq (1-\delta(K,L))\|h\|$$
Since $\sum_{k=1}^{\infty}\|f_{k}-P_{L}(h_{k})\|^{2}\leq \sum_{k=1}^{\infty}\|f_{k}-h_{k}\|^{2}$ and $\sum_{k=1}^{\infty}\|f_{k}-P_{L}(h_{k})\|\|g_{k}\|\leq \sum_{k=1}^{\infty}\|f_{k}-h_{k}\|\|g_{k}\|$,
we apply Theorem 2.1 to the sequence $\{P_{L}(h_{k})\}_{k=1}^{\infty}$ in the Hilbert space $L$ and the frame $\{f_{k}\}_{k=1}^{\infty}$ for $L$ to obtain that $\{P_{L}(h_{k})\}_{k=1}^{\infty}$ is a frame for $L$ with bounds ${1\over{D}}(1-\mu)^{2}$,
$B(1+{\sqrt{\lambda}\over\sqrt{B}})^{2}$. In particular, $P_{L}(K)=L$ and hence $Q:=P_{L}|_{K}$ is an isomorphism from $K$ onto $L$. It remains to show that $\{h_{k}\}_{k=1}^{\infty}$ is a frame for $K$. Indeed, for $h\in K$, we have
\begin{align*}
\sum_{k=1}^{\infty}|<h,h_{k}>|^{2}&=\sum_{k=1}^{\infty}|<h,Q^{-1}Q(h_{k})>|^{2}\\
&=\sum_{k=1}^{\infty}|<(Q^{-1})^{*}h,Q(h_{k})>|^{2}\\
&\leq B(1+{\sqrt{\lambda}\over\sqrt{B}})^{2}\|(Q^{-1})^{*}h\|^{2}\\
&\leq B(1+{\sqrt{\lambda}\over\sqrt{B}})^{2}\|Q^{-1}\|^{2}\|h\|^{2}\\
&\leq B(1+{\sqrt{\lambda}\over\sqrt{B}})^{2}{1\over{(1-\delta(K,L))^{2}}}\|h\|^{2}.
\end{align*}
On the other hand,
\begin{align*}
\sum_{k=1}^{\infty}|<h,h_{k}>|^{2}&=\sum_{k=1}^{\infty}|<(Q^{-1})^{*}h,Q(h_{k})>|^{2}\\
&\geq {1\over{D}}(1-\mu)^{2}\|(Q^{-1})^{*}h\|^{2}\\
&={1\over{D}}(1-\mu)^{2}\|(Q^{*})^{-1}h\|^{2}\\
&\geq {1\over{D}}(1-\mu)^{2}\|h\|^{2}.
\end{align*}

The proof is completed.

\end{proof}

In the special case that $\{f_{k}\}_{k=1}^{\infty}$ is a Riesz sequence, the gap does not need to be included in the above theorem.

\begin{thm}
Let $\{f_{k}\}_{k=1}^{\infty}$ be a Riesz sequence in $\mathcal{H}$ with bounds $A,B$ and
let $\{g_{k}\}_{k=1}^{\infty}$ be a sequence in $\mathcal{H}$ which satisfies $\mu:=\sum_{k=1}^{\infty}\|f_{k}-g_{k}\|\|S^{-1}(f_{k})\|<1$.
Then $\{g_{k}\}_{k=1}^{\infty}$ is a Riesz sequence with bounds $A(1-\mu)^{2},B(1+{\sqrt{\lambda}\over\sqrt{B}})^{2}$,where $\lambda:=\sum_{k=1}^{\infty}\|f_{k}-g_{k}\|^{2}$.
\end{thm}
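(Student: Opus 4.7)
The plan is to verify directly the two-sided Riesz sequence inequality (1.2) for $\{g_k\}_{k=1}^\infty$ with the stated bounds, on every finite scalar sequence. The payoff relative to Theorem 2.4 is that, for a Riesz sequence, the biorthogonality of $\{f_k\}$ with its canonical dual $\{S^{-1}(f_k)\}$ inside $L:=\overline{\operatorname{span}}\{f_k\}_{k=1}^\infty$ supplies a sharp \emph{coefficient-wise} bound $|c_k|\le \|f\|\,\|S^{-1}(f_k)\|$ for $f=\sum c_k f_k$, so one never has to pass through a projection and the gap hypothesis becomes unnecessary.

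First I would set up the framework: since $\{f_k\}$ is a Riesz sequence with bounds $A,B$, it is a Riesz basis for $L$, and inside $L$ the frame operator $S$ is invertible; the sequence $\{S^{-1}(f_k)\}$ is biorthogonal to $\{f_k\}$ in $L$, so every finite combination $f=\sum c_kf_k\in L$ has coefficients recoverable by $c_k=\langle f,S^{-1}(f_k)\rangle$, and Cauchy--Schwarz then gives $|c_k|\le \|f\|\,\|S^{-1}(f_k)\|$.

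For the upper bound, I would apply the triangle inequality
\[
\Bigl\|\sum c_k g_k\Bigr\|\le \Bigl\|\sum c_k f_k\Bigr\|+\Bigl\|\sum c_k(f_k-g_k)\Bigr\|,
\]
bound the first summand by $\sqrt{B}\,(\sum|c_k|^2)^{1/2}$ using the upper Riesz bound, and the second summand by Cauchy--Schwarz and condition (1) by $\sqrt{\lambda}\,(\sum|c_k|^2)^{1/2}$. Squaring yields $\|\sum c_k g_k\|^2\le B(1+\sqrt{\lambda}/\sqrt{B})^2\sum|c_k|^2$.

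For the lower bound, I would run the reverse triangle inequality and then exploit biorthogonality to control the perturbation term \emph{against the norm of $\sum c_k f_k$ itself}, not against $(\sum|c_k|^2)^{1/2}$:
\[
\Bigl\|\sum c_k(f_k-g_k)\Bigr\|\le \sum|c_k|\,\|f_k-g_k\|\le \Bigl\|\sum c_k f_k\Bigr\|\sum\|S^{-1}(f_k)\|\,\|f_k-g_k\|=\mu\Bigl\|\sum c_k f_k\Bigr\|,
\]
so $\|\sum c_k g_k\|\ge (1-\mu)\|\sum c_k f_k\|\ge (1-\mu)\sqrt{A}\,(\sum|c_k|^2)^{1/2}$, giving the lower bound $A(1-\mu)^2$ after squaring. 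There is no real obstacle here; the content is recognizing that the biorthogonal identity $c_k=\langle f,S^{-1}(f_k)\rangle$ internal to $L$ is exactly the substitute for the gap-based isomorphism used in the proof of Theorem 2.4, which is why no hypothesis on $\overline{\operatorname{span}}\{g_k\}$ versus $L$ is required.
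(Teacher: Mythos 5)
Your proposal is correct and is essentially the paper's own argument: the paper packages the same computation into an operator $U(f)=\sum_k\langle f,S^{-1}f_k\rangle g_k$ with $U(f_k)=g_k$ and $\|f-U(f)\|\le\mu\|f\|$ on $\overline{\operatorname{span}}\{f_k\}$, which is exactly your coefficient-wise estimate $|c_k|=|\langle f,S^{-1}(f_k)\rangle|\le\|f\|\,\|S^{-1}(f_k)\|$ applied to finite sums, and the upper bound via Cauchy--Schwarz and $\sqrt{\lambda}$ is likewise identical. No substantive difference.
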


\begin{proof}
It is easy to show that $$\|\sum_{k=1}^{\infty}c_{k}g_{k}\|\leq (\sqrt{B}+\sqrt{\lambda})(\sum_{k=1}^{\infty}|c_{k}|^{2})^{1\over{2}},
\forall \{c_{k}\}_{k=1}^{\infty}\in l^{2}.$$ Define $$U:\mathcal{H}\rightarrow \mathcal{H}, U(f)=\sum_{k=1}^{\infty}<f,S^{-1}f_{k}>g_{k}.$$ Then $U$ is well-defined,linear and bounded with
$\|U\|\leq {{\sqrt{B}+\sqrt{\lambda}}\over{\sqrt{A}}}$.

Indeed, for any $f\in \mathcal{H}$, we have
\begin{align*}
\|\sum_{k=1}^{\infty}<f,S^{-1}f_{k}>g_{k}\|&\leq (\sqrt{B}+\sqrt{\lambda})(\sum_{k=1}^{\infty}|<f,S^{-1}f_{k}>|^{2})^{1\over{2}}\\
&=(\sqrt{B}+\sqrt{\lambda})(\sum_{k=1}^{\infty}|<P_{L}f,S^{-1}f_{k}>|^{2})^{1\over{2}}\\
&\leq (\sqrt{B}+\sqrt{\lambda})\cdot({1\over{A}}\|P_{L}f\|^{2})^{1\over{2}}\\
&\leq {{\sqrt{B}+\sqrt{\lambda}}\over{\sqrt{A}}}\|f\|.
\end{align*}
where $L:=\overline{span}\{f_{k}\}_{k=1}^{\infty}$.

Since $\{f_{k}\}_{k=1}^{\infty}$ is a Riesz sequence, $\{f_{k}\}_{k=1}^{\infty}$ and $\{S^{-1}(f_{k})\}_{k=1}^{\infty}$ are biorthogonal. Thus $U(f_{k})=g_{k}$ for all $k$.

By the assumption, for any $f\in L$, we have $$\|f-U(f)\|\leq \mu\|f\|.$$  Consequently, $$\|U(f)\|\geq (1-\mu)\|f\|.$$

Therefore, for any $\{c_{k}\}_{k=1}^{\infty}\in l^{2}$, we have
\begin{align*}
\|\sum_{k=1}^{\infty}c_{k}g_{k}\|&=\|U(\sum_{k=1}^{\infty}c_{k}f_{k})\|\\
&\geq (1-\mu)\|\sum_{k=1}^{\infty}c_{k}f_{k}\|\\
&\geq (1-\mu)\sqrt{A}(\sum_{k=1}^{\infty}|c_{k}|^{2})^{1\over{2}}.
\end{align*}

\end{proof}

\section{Perturbations of Schauder frames in Banach spaces}
\begin{thm}
Suppose that $(x_{n}, f_{n})_{n=1}^{\infty}$ is a (unconditional) Schauder frame of Banach
space $X$. Let $\{y_{n}\}_{n=1}^{\infty}$ be a sequence in $X$ which
satisfies $\mu:=\sum_{n=1}^{\infty}\|x_{n}-y_{n}\|\|f_{n}\|<1$.
Then there exists a sequence $\{g_{n}\}_{n=1}^{\infty}$ in $X^{*}$ such that
$(y_{n},g_{n})_{n=1}^{\infty}$ is a (unconditional) Schauder frame of $X$.
If, in addition, $\lambda:=\sum_{n=1}^{\infty}{{\|x_{n}-y_{n}\|}\over{\|x_{n}\|}}<\infty$,
then $Z_{Min}$ is a sequence space associated to $(y_{n},g_{n})_{n=1}^{\infty}$; namely,
the operator
$$U:Z_{Min}\rightarrow X,
U(\sum_{n=1}^{\infty}c_{n}e^{Min}_{n})=\sum_{n=1}^{\infty}c_{n}y_{n}$$ is
bounded with
$\|U\|\leq 1+\lambda$.
And
$$V:X\rightarrow Z_{Min},V(x)=\sum_{n=1}^{\infty}g_{n}(x)e_{n}^{Min}$$ is also a bounded operator which satisfies
$${1\over{(1+\lambda)}}\|x\|\leq \|V(x)\|\leq {K\over{1-\mu}}\|x\|,\forall x\in X,$$
where $K$ is the projection constant of $(x_{n},f_{n})_{n=1}^{\infty}$.

\end{thm}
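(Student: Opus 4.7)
The plan is a Paley--Wiener / Boas-style perturbation argument adapted to Schauder frames, organized around the ``correction'' operator
$$L: X \to X, \quad L(x) := \sum_{n=1}^{\infty} f_{n}(x)\, y_{n}.$$
I would first verify convergence (unconditionally, if the original frame is unconditional) by writing $f_{n}(x) y_{n} = f_{n}(x) x_{n} + f_{n}(x)(y_{n}-x_{n})$: the first summand reassembles $x$ via the original reconstruction formula for $(x_{n},f_{n})$, while $\sum_{n}|f_{n}(x)|\,\|y_{n}-x_{n}\| \le \mu\|x\|$ forces the second piece to converge absolutely. The same estimate yields $\|I-L\|\le \mu<1$, so $L$ is an automorphism of $X$ with $\|L^{-1}\|\le 1/(1-\mu)$ by the Neumann series.

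Setting $g_{n} := f_{n}\circ L^{-1}\in X^{*}$, the identity $x = L(L^{-1}x) = \sum_{n} f_{n}(L^{-1}x)\, y_{n} = \sum_{n} g_{n}(x)\, y_{n}$ delivers the Schauder-frame reconstruction for $(y_{n}, g_{n})$, with unconditional convergence inherited by the same splitting argument. This establishes the first claim.

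Now assume $\lambda<\infty$. For $U$ applied to a finitely supported $\sum_{n} c_{n}e_{n}^{Min}$, I would split
$$U\Big(\sum_{n} c_{n}e_{n}^{Min}\Big) = \sum_{n} c_{n}x_{n} + \sum_{n} c_{n}(y_{n}-x_{n}),$$
bound $\|\sum_{n} c_{n}x_{n}\|$ by $\|\sum_{n} c_{n}e_{n}^{Min}\|_{Min}$ directly from the definition, and handle the second term through the single-index estimate $|c_{n}|\,\|x_{n}\| = \|\sum_{i=n}^{n} c_{i}x_{i}\| \le \|\sum_{i} c_{i}e_{i}^{Min}\|_{Min}$, which gives
$$\sum_{n}|c_{n}|\,\|y_{n}-x_{n}\| = \sum_{n}|c_{n}|\,\|x_{n}\|\cdot\frac{\|y_{n}-x_{n}\|}{\|x_{n}\|} \le \lambda\Big\|\sum_{n} c_{n}e_{n}^{Min}\Big\|_{Min}.$$
Together these yield $\|U\|\le 1+\lambda$.

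For $V$, observe that $V(x) = \sum_{n} g_{n}(x) e_{n}^{Min} = \sum_{n} f_{n}(L^{-1}x) e_{n}^{Min} = T_{Min}(L^{-1}x)$, so well-definedness and the upper estimate $\|V(x)\|_{Min} \le \|T_{Min}\|\,\|L^{-1}\|\,\|x\| \le K\|x\|/(1-\mu)$ are immediate. The lower bound $\|V(x)\|_{Min}\ge \|x\|/(1+\lambda)$ drops out of $x = U(V(x))$ together with $\|U\|\le 1+\lambda$. The only real obstacle is bookkeeping---keeping careful track of the bi-monotonicity-style estimate against $\|\cdot\|_{Min}$ and checking that unconditional convergence is propagated correctly from $(x_{n},f_{n})$ to $(y_{n},g_{n})$; conceptually the argument is just the classical perturbation trick played inside the minimal associated space.
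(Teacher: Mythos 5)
Your proposal is correct and follows essentially the same route as the paper: the same correction operator $L(x)=\sum_n f_n(x)y_n$ with $\|I-L\|\le\mu$, the choice $g_n=(L^{-1})^*f_n$, the same splitting $\sum_n c_n y_n=\sum_n c_n x_n+\sum_n c_n(y_n-x_n)$ with the single-block estimate $|c_n|\,\|x_n\|\le\|\sum_i c_i e_i^{Min}\|_{Min}$ for $U$, and the identity $V=T_{Min}\circ L^{-1}$ together with $x=UV(x)$ for the two-sided bound on $V$. No gaps; the paper's write-up just carries out the subsequence Cauchy-criterion check for unconditionality that you indicate in outline.
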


\begin{proof}
Assume that $(x_{n}, f_{n})_{n=1}^{\infty}$ is a Schauder frame.
Define the operator $$L:X\rightarrow X, x\mapsto
\sum_{n=1}^{\infty}f_{n}(x)y_{n}.$$
By the hypothesis, an easy argument shows that $L$ is well-defined and
$\|I-L\|\leq \mu<1$.  This implies that $L$ is
invertible. Let $g_{n}=(L^{-1})^{*}f_{n}$ for each $n$. It is easy to check that
$(y_{n},g_{n})_{n=1}^{\infty}$ is a Schauder frame of $X$.

If $(x_{n}, f_{n})_{n=1}^{\infty}$ is an
unconditional Schauder frame, we'll show that $(y_{n},g_{n})_{n=1}^{\infty}$ is also unconditional. It suffices to show that,
for any $x\in X$ and for any increasing sequence $\{i_{k}\}_{k=1}^{\infty}$ of positive integers, the series $\sum_{k=1}^{\infty}g_{i_{k}}(x)y_{i_{k}}$ converges in norm. Indeed, for $m\leq n$, we have
\begin{align*}
\|\sum_{k=m}^{n}g_{i_{k}}(x)y_{i_{k}}\|&=\|\sum_{k=m}^{n}f_{i_{k}}(L^{-1}(x))y_{i_{k}}\|\\
&\leq \|\sum_{k=m}^{n}f_{i_{k}}(L^{-1}(x))(y_{i_{k}}-x_{i_{k}})\|+\|\sum_{k=m}^{n}f_{i_{k}}(L^{-1}(x))x_{i_{k}}\|\\
&\leq \sum_{k=m}^{n}\|f_{i_{k}}\|\|L^{-1}(x)\|\|y_{i_{k}}-x_{i_{k}}\|+\|\sum_{k=m}^{n}f_{i_{k}}(L^{-1}(x))x_{i_{k}}\|\\
&\rightarrow 0(m,n\rightarrow\infty).
\end{align*}
Hence $\sum_{k=1}^{\infty}g_{i_{k}}(x)y_{i_{k}}$ converges.

Now assume that $\sum_{n=1}^{\infty}{{\|x_{n}-y_{n}\|}\over{\|x_{n}\|}}<\infty$,
we'll show that $Z_{Min}$ is a sequence space associated to $(y_{n},g_{n})_{n=1}^{\infty}$.
First we prove that $U$ is well-defined and $\|U\|\leq1+\lambda$.
Indeed, for any $(c_{i})_{i}\in c_{00}$, we have
\begin{align*}
\|\sum_{i}c_{i}y_{i}\|&\leq
\|\sum_{i}c_{i}(x_{i}-y_{i})\|+\|\sum_{i}c_{i}x_{i}\|\\
&\leq \sum_{i}|c_{i}|\|x_{i}-y_{i}\|+\|\sum_{i}c_{i}e^{Min}_{i}\|_{Min}\\
&=\sum_{i}\|c_{i}x_{i}\|{{\|x_{i}-y_{i}\|}\over{\|x_{i}\|}}+\|\sum_{i}c_{i}e^{Min}_{i}\|_{Min}\\
&\leq (1+\lambda)\|\sum_{i}c_{i}e^{Min}_{i}\|_{Min}.
\end{align*}

Therefore $$\|U\|\leq 1+\lambda.$$

Finally, for each $x\in X$, one has
\begin{align*}
\|V(x)\|&=\|\sum_{n=1}^{\infty}g_{n}(x)e_{n}^{Min}\|\\
&=\|\sum_{n=1}^{\infty}f_{n}(L^{-1}(x))e_{n}^{Min}\|\\
&=\|T_{Min}(L^{-1}(x))\|\\
&\leq {K\over{1-\mu}}\|x\|.
\end{align*}
On the other hand,
$$\|x\|=\|UV(x)\|\leq (1+\lambda)\|V(x)\|.$$

The result follows.

\end{proof}

\begin{remark}
It is obvious that, in Theorem 3.1,  ``$\sum_{n=1}^{\infty}{{\|x_{n}-y_{n}\|}\over{\|x_{n}\|}}<\infty$'',
in general, does not imply  ``$\sum_{n=1}^{\infty}\|x_{n}-y_{n}\|\|f_{n}\|<1$''.
On the other hand, if $\{x_{n}\}_{n=1}^{\infty}$ is a basis for $X$ and
$\{f_{n}\}_{n=1}^{\infty}$ is the coefficient functionals associated to $\{x_{n}\}_{n=1}^{\infty}$, then  ``$\sum_{n=1}^{\infty}\|x_{n}-y_{n}\|\|f_{n}\|<1$'' implies ``$\sum_{n=1}^{\infty}{{\|x_{n}-y_{n}\|}\over{\|x_{n}\|}}<\infty$'' because $\{x_{n}\}_{n=1}^{\infty}$ and $\{f_{n}\}_{n=1}^{\infty}$ are biorthogonal. However, the following example shows that this is not true in the case of Schauder frames.
\end{remark}

\begin{example}
Let $\{e_{n}\}_{n=1}^{\infty}$ be an orthonormal basis for $\mathcal{H}$. Choose an integer $N$ with $\sum_{n=N+1}^{\infty}{1\over{n^{3\over{2}}}}<1$.
Let $\{x_{k}\}_{k=1}^{\infty}$ be the sequence where each ${e_{n}\over{n}}$ is repeated by $n$ times. That is,
$$\{x_{k}\}_{k=1}^{\infty}: e_{1}, {e_{2}\over{2}}, {e_{2}\over{2}}, {e_{3}\over{3}}, {e_{3}\over{3}}, {e_{3}\over{3}},...$$
Again let $\{f_{k}\}_{k=1}^{\infty}$ be the sequence where each $e_{n}$ is repeated by $n$ times. That is,
$$\{f_{k}\}_{k=1}^{\infty}: e_{1}, e_{2}, e_{2}, e_{3}, e_{3}, e_{3},...$$
Then $(x_{k},f_{k})_{k=1}^{\infty}$ is a Schauder frame for $\mathcal{H}$.

Let $a_{k}={1\over{(k+N)^{3\over{2}}}}(k=1,2,...)$ and take any unit element $e$ in $\mathcal{H}$. We let $$\{y_{k}\}_{k=1}^{\infty}=\{x_{k}+a_{k}e\}_{k=1}^{\infty}.$$  Then $$\sum_{k=1}^{\infty}\|x_{k}-y_{k}\|\|f_{k}\|<1.$$
But
\begin{align*}
\sum_{k=1}^{\infty}{{\|x_{k}-y_{k}\|}\over{\|x_{k}\|}}&=\sum_{k=1}^{\infty}{{a_{k}}\over{\|x_{k}\|}}\\
&=\sum_{n=1}^{\infty}n(\sum_{k={{n(n-1)}\over{2}}+1}^{{{n(n-1)}\over{2}}+n}a_{k})\\
&=\sum_{n=1}^{\infty}n({1\over ({{n(n-1)}\over{2}}+1+N)^{3\over{2}}}+...+{1\over ({{n(n-1)}\over{2}}+n+N)^{3\over{2}}})\\
&\geq \sum_{n=1}^{\infty}n\cdot n\cdot {1\over ({{n(n-1)}\over{2}}+n+N)^{3\over{2}}}\\
&=\infty.
\end{align*}

\end{example}

\begin{cor}
Let $(x_{k},f_{k})_{k=1}^{\infty}$ be a near-Schauder basis of Banach space $X$. If $\{y_{k}\}_{k=1}^{\infty}$ is a sequence in $X$ which satisfies the following two conditions:
\item[(1)]$\lambda:=\sum_{n=1}^{\infty}{{\|x_{n}-y_{n}\|}\over{\|x_{n}\|}}<\infty$;
\item[(2)]$\mu:=\sum_{n=1}^{\infty}\|x_{n}-y_{n}\|\|f_{n}\|<1$.

Then there exists a sequence $\{g_{k}\}_{k=1}^{\infty}$ in $X^{*}$ such that
$(y_{k},g_{k})_{k=1}^{\infty}$ is a near-Schauder basis; In this case,  $(x_{k},f_{k})_{k=1}^{\infty}$ and $(y_{k},g_{k})_{k=1}^{\infty}$ has the same excess.
\end{cor}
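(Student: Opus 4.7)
The plan is to combine Theorem~3.1 with the classical Paley--Wiener basis perturbation (Theorem~1.1), following the template of Corollary~2.2. Let $n$ denote the excess of $(x_{k},f_{k})_{k=1}^{\infty}$, and after a harmless reindexing assume that $\{x_{k}\}_{k=n+1}^{\infty}$ is a Schauder basis of $X$; denote its biorthogonal coefficient functionals by $\{\tilde f_{k}\}_{k\geq n+1}$ and its basis constant by $M$. The standard inequality $\|x_{k}\|\,\|\tilde f_{k}\|\leq 2M$ together with hypothesis~(1) yields
\[
\sum_{k=n+1}^{\infty}\|x_{k}-y_{k}\|\,\|\tilde f_{k}\|\;\leq\;2M\sum_{k=n+1}^{\infty}\frac{\|x_{k}-y_{k}\|}{\|x_{k}\|}\;<\;\infty.
\]
Hypothesis~(2) allows Theorem~3.1 to produce $\{g_{k}\}\subset X^{*}$ for which $(y_{k},g_{k})_{k=1}^{\infty}$ is a Schauder frame of $X$; in particular $\{y_{k}\}_{k=1}^{\infty}$ is complete in $X$.

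Next, choose $m\geq n$ so large that $\sum_{k>m}\|x_{k}-y_{k}\|\,\|\tilde f_{k}\|<1$, and apply Theorem~1.1 to the basis $\{x_{k}\}_{k\geq n+1}$ with the perturbed sequence equal to $x_{k}$ for $n+1\leq k\leq m$ and to $y_{k}$ for $k>m$. This produces a Schauder basis $\{x_{n+1},\ldots,x_{m},y_{m+1},y_{m+2},\ldots\}$ of $X$, whence $\{y_{k}\}_{k>m}$ is a basic sequence whose closed span $Y$ has codimension exactly $m-n$ in $X$. Since $\{y_{k}\}_{k=1}^{\infty}$ spans $X$, the images of $y_{1},\ldots,y_{m}$ generate the $(m-n)$-dimensional quotient $X/Y$, so one can select indices $k_{1}<\cdots<k_{m-n}$ in $\{1,\ldots,m\}$ whose quotient images form a basis of $X/Y$. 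The ordered family $\{y_{k_{1}},\ldots,y_{k_{m-n}}\}\cup\{y_{k}\}_{k>m}$ (in its natural order) is then a Schauder basis of $X$, obtained from $\{y_{k}\}_{k=1}^{\infty}$ by deleting exactly $n$ elements. The equality of excesses follows by running the same argument with the roles of $\{x_{k}\}$ and $\{y_{k}\}$ swapped, exactly as in Corollary~2.2.

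I expect the main obstacle to lie in the passage from the frame functionals $\{f_{k}\}$ supplied by hypothesis~(2) to the genuinely biorthogonal coefficient functionals $\{\tilde f_{k}\}$ that Paley--Wiener requires: hypothesis~(2) only gives summable control of $\|x_{k}-y_{k}\|\,\|f_{k}\|$, whereas Theorem~1.1 asks for the same type of control with $\|\tilde f_{k}\|$ in place of $\|f_{k}\|$. Hypothesis~(1) provides precisely the missing ingredient via the uniform estimate $\|x_{k}\|\,\|\tilde f_{k}\|\leq 2M$; it is exactly this trade, available because $\{x_{k}\}_{k\geq n+1}$ is an honest Schauder basis after the reindexing, that makes the Paley--Wiener tail bound accessible for sufficiently large $m$.
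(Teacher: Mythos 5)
Your argument is correct, and it reaches the same destination as the paper's proof by a mildly but genuinely different route through the middle. The paper also starts from Theorem~3.1 (to get the frame $(y_k,g_k)$ and completeness of $\{y_k\}$) and also uses exactly your key estimate --- in the form $|c_k|\le 2C\|\sum_j c_jx_j\|/\|x_k\|$ rather than $\|x_k\|\,\|\tilde f_k\|\le 2M$ --- to exploit hypothesis (1) on a tail. But where you feed the resulting bound $\sum_{k>m}\|x_k-y_k\|\,\|\tilde f_k\|<1$ directly into the Paley--Wiener theorem applied to the hybrid sequence $x_{n+1},\dots,x_m,y_{m+1},y_{m+2},\dots$, thereby getting in one stroke that $\{y_k\}_{k>m}$ is basic \emph{and} that its closed span has codimension $m-n$, the paper instead builds the operator $L:\overline{\mathrm{span}}\{x_k\}_{k>m}\to\overline{\mathrm{span}}\{y_k\}_{k>m}$, $x_k\mapsto y_k$, shows it is bounded with closed range, and then invokes Theorem~8 of Casazza--Kalton to equate $\dim(X/\overline{\mathrm{span}}\{x_k\}_{k>m})$ with $\dim(X/\overline{\mathrm{span}}\{y_k\}_{k>m})$. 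Your version is the more self-contained of the two: it replaces the external citation by the classical perturbation theorem already quoted as Theorem~1.1, at the price of being tied to the basis case (the paper's $L$-plus-Casazza--Kalton mechanism is the same one it reuses for frame sequences in Corollary~2.2, where no ambient basis is available). The finite-dimensional bookkeeping at the end --- completeness of $\{y_k\}$ lets you lift a basis of the $(m-n)$-dimensional quotient from $\{y_1,\dots,y_m\}$, so exactly $n$ elements are deleted --- is the same in both proofs. One small caveat: your closing remark that equality of excesses follows ``by swapping the roles of $\{x_k\}$ and $\{y_k\}$'' is not literally available, since hypotheses (1) and (2) are not symmetric in the two sequences (they involve $\|x_n\|$ and $\|f_n\|$, not $\|y_n\|$ and $\|g_n\|$); but this is harmless, because your construction already exhibits a deletion of exactly $n$ elements of $\{y_k\}$ producing a Schauder basis, which together with the well-definedness of the excess (implicit in the paper's definition, and used in the same way in its own proof) gives the equality directly.
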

\begin{proof}
First it follows from Theorem 3.1 that there exists a sequence $\{g_{k}\}_{k=1}^{\infty}$ in $X^{*}$ such that
$(y_{k},g_{k})_{k=1}^{\infty}$ is a Schauder frame for $X$ and hence $\{y_{k}\}_{k=1}^{\infty}$ is complete in $X$. Then we shall show that $(y_{k},g_{k})_{k=1}^{\infty}$ is a near-Schauder basis.
Suppose that $(x_{k},f_{k})_{k=1}^{\infty}$ is a near-Schauder basis with excess $n$.
We may assume that, by changing the index set, $\{x_{k}\}_{k=1}^{\infty}=\{x_{k}\}_{k=1}^{n}\cup \{x_{k}\}_{k=n+1}^{\infty}$,
where $\{x_{k}\}_{k=n+1}^{\infty}$ is a Schauder basis for $X$. Then there exists a constant $C>0$ such that $$\|\sum_{k=n+1}^{s}c_{k}x_{k}\|\leq C\|\sum_{k=n+1}^{t}c_{k}x_{k}\|$$
for all scalars $c_{n+1},c_{n+2},...,c_{t}$ and for all $t\geq s\geq n+1$.

By (1), we choose $m>n$ such that $$2C\cdot \sum_{k=m+1}^{\infty}{{\|x_{k}-y_{k}\|}\over{\|x_{k}\|}}<1.$$
Define $$L:{\overline{span}}\{x_{k}\}_{m+1}^{\infty}\rightarrow {\overline{span}}\{y_{k}\}_{m+1}^{\infty}, L(\sum_{k=m+1}^{\infty}c_{k}x_{k})=\sum_{k=m+1}^{\infty}c_{k}y_{k}.$$
Then $L$ is well-defined and bounded. Indeed, $\sum_{k=m+1}^{\infty}c_{k}x_{k}\in {\overline{span}}\{x_{k}\}_{m+1}^{\infty}$.

Note that for each $k\geq m+1$, we have
$$|c_{k}|={{\|\sum_{j=m+1}^{k}c_{j}x_{j}-\sum_{j=m+1}^{k-1}c_{j}x_{j}\|}\over{\|x_{k}\|}}
\leq {{2C\cdot \|\sum_{j=m+1}^{\infty}c_{j}x_{j}\|}\over{\|x_{k}\|}}.$$

Then for all $t\geq s\geq m+1$, one has
\begin{align*}
\|\sum_{k=s}^{t}c_{k}y_{k}\|&\leq \|\sum_{k=s}^{t}c_{k}(y_{k}-x_{k})\|+\|\sum_{k=s}^{t}c_{k}x_{k}\|\\
&\leq \sum_{k=s}^{t}|c_{k}|\|y_{k}-x_{k}\|+\|\sum_{k=s}^{t}c_{k}x_{k}\|\\
&\leq 2C\cdot \|\sum_{j=m+1}^{\infty}c_{j}x_{j}\|\sum_{k=s}^{t}{{\|y_{k}-x_{k}\|}\over{\|x_{k}\|}}+\|\sum_{k=s}^{t}c_{k}x_{k}\|\\
&\rightarrow 0(s,t\rightarrow \infty).
\end{align*}
Hence $\sum_{k=m+1}^{\infty}c_{k}y_{k}$ converges and $\|L\|\leq 2C\lambda+1$.

Moreover
\begin{align*}
\|\sum_{k=m+1}^{\infty}c_{k}x_{k}-L(\sum_{k=m+1}^{\infty}c_{k}x_{k})\|&=\|\sum_{k=m+1}^{\infty}c_{k}x_{k}-\sum_{k=m+1}^{\infty}c_{k}y_{k}\|\\
&\leq \sum_{k=m+1}^{\infty}|c_{k}|\|x_{k}-y_{k}\|\\
&\leq 2C\|\sum_{j=m+1}^{\infty}c_{j}x_{j}\|\cdot \sum_{k=m+1}^{\infty}{{\|x_{k}-y_{k}\|}\over{\|x_{k}\|}}.
\end{align*}
This implies that $L({\overline{span}}\{x_{k}\}_{m+1}^{\infty})$ is closed.

Since $L(x_{k})=y_{k}$ for each $k\geq m+1$, we have
$L({\overline{span}}\{x_{k}\}_{m+1}^{\infty})={\overline{span}}\{y_{k}\}_{m+1}^{\infty}$
and $\{y_{k}\}_{k=m+1}^{\infty}$ is a basic sequence.

Again by Theorem 8 in \cite{CK}, one has $$dim(X/{\overline{span}}\{x_{k}\}_{m+1}^{\infty})=dim(X/{\overline{span}}\{y_{k}\}_{m+1}^{\infty}).$$
As in the proof of Corollary 2.2, by the completeness of $\{y_{k}\}_{k=1}^{\infty}$ in $X$,
we can pick the same numbers of linearly independent elements from $\{x_{k}\}_{k=1}^{m}$(resp.
$\{y_{k}\}_{k=1}^{m}$) to add to $\{x_{k}\}_{k=m+1}^{\infty}$(resp.
$\{y_{k}\}_{k=m+1}^{\infty}$) to get Schauder bases for $X$. This proves that $\{x_{k}\}_{k=1}^{\infty}$ and $\{y_{k}\}_{k=1}^{\infty}$ has the same excess.

\end{proof}

The following result is also a Schauder frame version of another perturbation theorem of bases in Banach spaces.

\begin{thm}
Suppose that $(x_{n}, f_{n})_{n=1}^{\infty}$ is a Schauder frame of  Banach
space $X$. Let $\{y_{n}\}_{n=1}^{\infty}$ be a sequence in $X$ which
satisfies
$\lambda:=\sum_{n=1}^{\infty}{{\|x_{n}-y_{n}\|}\over{\|x_{n}\|}}<{1\over{K}}$,where $K$ is the projection constant of $(x_{n},f_{n})_{n=1}^{\infty}$.
Then there exists a sequence $\{g_{n}\}_{n=1}^{\infty}$ in $X^{*}$ such that
$(y_{n},g_{n})_{n=1}^{\infty}$ is a Schauder frame of $X$. Moreover, $Z_{Min}$ is a sequence space associated to $(y_{n},g_{n})_{n=1}^{\infty}$, namely,
the operator
$$U:Z_{Min}\rightarrow X,
U(\sum_{n=1}^{\infty}c_{n}e^{Min}_{n})=\sum_{n=1}^{\infty}c_{n}y_{n}$$ is
bounded with
$\|U\|\leq
1+\lambda.$

and
$$V:X\rightarrow Z_{Min},V(x)=\sum_{n=1}^{\infty}g_{n}(x)e_{n}^{Min}$$ is also a bounded operator which satisfies
$${1\over{1+\lambda}}\|x\|\leq \|V(x)\|\leq {K\over{1-\lambda\cdot K}}\|x\|,\forall x\in X.$$
In addition, if $(x_{n}, f_{n})_{n=1}^{\infty}$ is an
unconditional Schauder frame, then $(y_{n},g_{n})_{n=1}^{\infty}$ is also an unconditional Schauder frame.
\end{thm}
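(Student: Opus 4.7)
The plan is to follow the blueprint of Theorem 3.1, replacing the bound $\|I-L\|\leq \mu$ with a bound obtained via the projection constant $K$. Define
$$L:X\to X,\qquad Lx=\sum_{n=1}^{\infty}f_{n}(x)y_{n},$$
and estimate $\|x-Lx\|=\|\sum f_{n}(x)(x_{n}-y_{n})\|$. The key input is the elementary consequence of the definition of the projection constant: taking a one-term block, $\|f_{n}(x)x_{n}\|\leq K\|x\|$, so $|f_{n}(x)|\leq K\|x\|/\|x_{n}\|$. Combined with the hypothesis this yields
$$\|x-Lx\|\leq \sum_{n=1}^{\infty}|f_{n}(x)|\,\|x_{n}-y_{n}\|\leq K\|x\|\sum_{n=1}^{\infty}\frac{\|x_{n}-y_{n}\|}{\|x_{n}\|}=K\lambda\,\|x\|.$$
Since $K\lambda<1$, $L$ is invertible with $\|L^{-1}\|\leq 1/(1-K\lambda)$. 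Setting $g_{n}=(L^{-1})^{*}f_{n}$ gives the candidate coefficient functionals, and exactly as in Theorem 3.1 one checks $\sum g_{n}(x)y_{n}=LL^{-1}x=x$, so $(y_{n},g_{n})_{n=1}^{\infty}$ is a Schauder frame.

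Next I would handle the associated space part. For $U$, the definition of $\|\cdot\|_{Min}$ gives the companion one-block estimate $\|c_{k}x_{k}\|\leq \|\sum_i c_{i}e_{i}^{Min}\|_{Min}$, i.e.\ $|c_{k}|\leq \|\sum_i c_{i}e_{i}^{Min}\|_{Min}/\|x_{k}\|$. Then
$$\|\textstyle\sum_i c_{i}y_{i}\|\leq \|\sum_i c_{i}x_{i}\|+\sum_i |c_{i}|\,\|x_{i}-y_{i}\|\leq (1+\lambda)\|\sum_i c_{i}e_{i}^{Min}\|_{Min},$$
proving $\|U\|\leq 1+\lambda$ and well-definedness on $Z_{Min}$. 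For $V$, observe the operator identity $V=T_{Min}\circ L^{-1}$ (since $g_{n}(x)=f_{n}(L^{-1}x)$), which immediately yields $\|V(x)\|\leq \|T_{Min}\|\|L^{-1}\|\|x\|\leq \frac{K}{1-\lambda K}\|x\|$. The lower bound is automatic from $UV=\mathrm{Id}_{X}$: $\|x\|=\|UVx\|\leq (1+\lambda)\|V(x)\|$.

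Finally, for the unconditional case, I would show that for any strictly increasing $\{i_{k}\}\subset\mathbb{N}$ and any $x\in X$, the series $\sum g_{i_{k}}(x)y_{i_{k}}=\sum f_{i_{k}}(L^{-1}x)y_{i_{k}}$ is Cauchy. Split
$$\textstyle\sum_{k=m}^{n}f_{i_{k}}(L^{-1}x)y_{i_{k}}=\sum_{k=m}^{n}f_{i_{k}}(L^{-1}x)(y_{i_{k}}-x_{i_{k}})+\sum_{k=m}^{n}f_{i_{k}}(L^{-1}x)x_{i_{k}}.$$
The second piece tends to $0$ by unconditionality of $(x_{n},f_{n})_{n=1}^{\infty}$ applied to $L^{-1}x$, while the first piece is bounded by $K\|L^{-1}x\|\sum_{k=m}^{n}\|x_{i_{k}}-y_{i_{k}}\|/\|x_{i_{k}}\|$, which is a tail of a convergent series. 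The main obstacle in this whole scheme is really only the invertibility of $L$, which hinges on noticing that the projection constant yields the pointwise bound $|f_{n}(x)|\leq K\|x\|/\|x_{n}\|$ needed to absorb $\lambda$; once this is in hand, the remaining estimates are routine variants of those in Theorem 3.1 and its unconditional refinement.
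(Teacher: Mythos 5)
Your proposal is correct and follows essentially the same route as the paper: the same operator $L x=\sum f_{n}(x)y_{n}$, the same key estimate $\|f_{n}(x)x_{n}\|\leq K\|x\|$ (a one-term block in the definition of the projection constant) yielding $\|I-L\|\leq \lambda K<1$, the same choice $g_{n}=(L^{-1})^{*}f_{n}$, the identity $V=T_{Min}\circ L^{-1}$ for the bounds on $V$, and the same tail-splitting argument for unconditionality. No gaps.
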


\begin{proof} Similar to Theorem 3.1, we can derive that $U$ is well-defined and $\|U\|\leq
1+\lambda$.

Define $$L:X\rightarrow X, x\mapsto
\sum_{n=1}^{\infty}f_{n}(x)y_{n}.$$ Then $L$ is well-defined. Indeed,
for $x\in X$ and for $m\leq n$, we have
\begin{align*}
\|\sum_{k=m}^{n}f_{k}(x)y_{k}\|&\leq \|\sum_{k=m}^{n}f_{k}(x)(y_{k}-x_{k})\|+\|\sum_{k=m}^{n}f_{k}(x)x_{k}\|\\
&\leq \sum_{k=m}^{n}\|f_{k}(x)x_{k}\|{{\|y_{k}-x_{k}\|}\over{\|x_{k}\|}}+\|\sum_{k=m}^{n}f_{k}(x)x_{k}\|\\
&\leq K\cdot \|x\|\sum_{k=m}^{n}{{\|y_{k}-x_{k}\|}\over{\|x_{k}\|}}+\|\sum_{k=m}^{n}f_{k}(x)x_{k}\|\\
&\rightarrow 0(m,n\rightarrow\infty).
\end{align*}
Therefore the series $\sum_{n=1}^{\infty}f_{n}(x)y_{n}$ converges and $\|L\|\leq \lambda\cdot K+1$.

Furthermore
\begin{align*}
\|x-L(x)\|&=\|x-\sum_{n=1}^{\infty}f_{n}(x)y_{n}\|\\
&=\|\sum_{n=1}^{\infty}f_{n}(x)(x_{n}-y_{n})\|\\
&\leq \sum_{n=1}^{\infty}|f_{n}(x)|\|x_{n}-y_{n}\|\\
&=\sum_{n=1}^{\infty}\|f_{n}(x)x_{n}\|{{\|x_{n}-y_{n}\|}\over{\|x_{n}\|}}\\
&\leq \lambda\cdot K\|x\|.
\end{align*}
Thus $$\|I-L\|\leq \lambda\cdot K<1.$$  As an operator, $L$ is
invertible. Let $g_{n}=(L^{-1})^{*}f_{n}$ for each $n$. Then
$(y_{n},g_{n})_{n=1}^{\infty}$ is a Schauder frame of $X$.

Subsequently, we compute the bounds of $\|V(x)\|(x\in X)$.
For each $x\in X$, one has
\begin{align*}
\|V(x)\|&=\|\sum_{n=1}^{\infty}g_{n}(x)e_{n}^{Min}\|\\
&=\|\sum_{n=1}^{\infty}f_{n}(L^{-1}(x))e_{n}^{Min}\|\\
&=\|T_{Min}(L^{-1}(x))\|\\
&\leq {K\over{1-\lambda\cdot K}}\|x\|.
\end{align*}
On the other hand,
$$\|x\|=\|UV(x)\|\leq (1+\lambda)\|V(x)\|.$$
Putting these two inequalities together, we have
$${1\over{1+\lambda}}\|x\|\leq \|V(x)\|\leq {K\over{1-\lambda\cdot K}}\|x\|.$$
Finally, if $(x_{n}, f_{n})_{n=1}^{\infty}$ is an unconditional Schauder frame, we want to show that $(y_{n}, g_{n})_{n=1}^{\infty}$ is also unconditional.
It is enough to prove that
for any $x\in X$ and for any increasing sequence $\{i_{k}\}_{k=1}^{\infty}$ of positive integers, the series $\sum_{k=1}^{\infty}g_{i_{k}}(x)y_{i_{k}}$ converges in norm. Indeed, for $m\leq n$, we have
\begin{align*}
\|\sum_{k=m}^{n}g_{i_{k}}(x)y_{i_{k}}\|&=\|\sum_{k=m}^{n}f_{i_{k}}(L^{-1}(x))y_{i_{k}}\|\\
&\leq \|\sum_{k=m}^{n}f_{i_{k}}(L^{-1}(x))(y_{i_{k}}-x_{i_{k}})\|+\|\sum_{k=m}^{n}f_{i_{k}}(L^{-1}(x))x_{i_{k}}\|\\
&\leq \sum_{k=m}^{n}\|f_{i_{k}}(L^{-1}(x))x_{i_{k}}\|\cdot{{\|y_{i_{k}}-x_{i_{k}}\|}\over{\|x_{i_{k}}\|}}+\|\sum_{k=m}^{n}f_{i_{k}}(L^{-1}(x))x_{i_{k}}\|\\
&\leq K\cdot \|L^{-1}(x)\|\sum_{k=m}^{n}{{\|y_{i_{k}}-x_{i_{k}}\|}\over{\|x_{i_{k}}\|}}+\|\sum_{k=m}^{n}f_{i_{k}}(L^{-1}(x))x_{i_{k}}\|\\
&\rightarrow 0(m,n\rightarrow\infty)
\end{align*}
from which the conclusion follows and the proof is completed.

\end{proof}

Similarly, we have the following corollary.

\begin{cor}
Let $(x_{k},f_{k})_{k=1}^{\infty}$ be a near-Schauder basis of Banach space $X$. If $\{y_{k}\}_{k=1}^{\infty}$ is a sequence in $X$ which satisfies the condition in Theorem 3.3, then there exists a sequence $\{g_{k}\}_{k=1}^{\infty}$ in $X^{*}$ such that
$(y_{k},g_{k})_{k=1}^{\infty}$ is a near-Schauder basis; In this case,  $(x_{k},f_{k})_{k=1}^{\infty}$ and $(y_{k},g_{k})_{k=1}^{\infty}$ has the same excess.
\end{cor}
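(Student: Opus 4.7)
The plan is to mirror the proof of Corollary 3.1, replacing the appeal to Theorem 3.1 with an appeal to Theorem 3.3 (whose hypothesis $\lambda < 1/K$ already encompasses the convergence of $\sum_{n} \|x_n - y_n\|/\|x_n\|$ needed below). First, I would invoke Theorem 3.3 directly to obtain a sequence $\{g_k\}_{k=1}^\infty \subset X^*$ making $(y_k,g_k)_{k=1}^\infty$ a Schauder frame of $X$; in particular $\{y_k\}_{k=1}^\infty$ is complete in $X$. Then I would set up the near-basis structure: by reindexing, we may write $\{x_k\}_{k=1}^\infty = \{x_k\}_{k=1}^n \cup \{x_k\}_{k=n+1}^\infty$ with $\{x_k\}_{k=n+1}^\infty$ a Schauder basis of $X$ whose basis constant I will call $C$.

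Next, using the convergence of $\sum_k \|x_k - y_k\|/\|x_k\|$ (which is implied by the hypothesis $\lambda < 1/K$), pick $m > n$ so that $2C \sum_{k=m+1}^\infty \|x_k - y_k\|/\|x_k\| < 1$. This lets me define
\[
L: \overline{\operatorname{span}}\{x_k\}_{k=m+1}^\infty \to X, \qquad L\Bigl(\sum_{k=m+1}^\infty c_k x_k\Bigr) = \sum_{k=m+1}^\infty c_k y_k,
\]
and, exactly as in the proof of Corollary 3.1, use the standard basis-coefficient estimate $|c_k| \le 2C\,\|\sum_j c_j x_j\|/\|x_k\|$ to show $L$ is well-defined and bounded, and that $\|\mathrm{Id} - L\| < 1$ on its domain. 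This forces $L$ to be an isomorphism onto a closed subspace, so $L(\overline{\operatorname{span}}\{x_k\}_{k=m+1}^\infty) = \overline{\operatorname{span}}\{y_k\}_{k=m+1}^\infty$ and $\{y_k\}_{k=m+1}^\infty$ is a basic sequence.

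Finally, I would apply Theorem 8 of \cite{CK} to the perturbation $L$ to conclude
\[
\dim\bigl(X/\overline{\operatorname{span}}\{x_k\}_{k=m+1}^\infty\bigr) = \dim\bigl(X/\overline{\operatorname{span}}\{y_k\}_{k=m+1}^\infty\bigr),
\]
and then use the completeness of $\{y_k\}_{k=1}^\infty$ in $X$ to select the same number of linearly independent vectors from $\{x_k\}_{k=1}^m$ and from $\{y_k\}_{k=1}^m$, respectively, which, when adjoined to $\{x_k\}_{k=m+1}^\infty$ and $\{y_k\}_{k=m+1}^\infty$, produce Schauder bases of $X$. This shows $(y_k,g_k)_{k=1}^\infty$ is a near-Schauder basis with the same excess as $(x_k,f_k)_{k=1}^\infty$.

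The main technical obstacle is the definition of $L$ and verifying that it has closed range; once the estimate $\|\mathrm{Id} - L\| < 1$ is established, everything else is bookkeeping. The choice of $m$ must simultaneously tame the tail $\sum_{k>m} \|x_k - y_k\|/\|x_k\|$ against the basis constant $C$ (not against the projection constant $K$), so one needs the full convergence of $\sum_k \|x_k - y_k\|/\|x_k\|$ rather than merely its smallness — this is automatic from Theorem 3.3's hypothesis, so no additional assumption is required.
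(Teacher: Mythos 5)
Your proposal is correct and is exactly the adaptation the paper intends: the paper omits the proof of this corollary (saying only ``Similarly''), and your argument mirrors the proof of Corollary 3.1 verbatim, with Theorem 3.3 supplying both the Schauder frame $(y_k,g_k)_{k=1}^{\infty}$ (hence completeness of $\{y_k\}_{k=1}^{\infty}$) and, via $\lambda<1/K<\infty$, the convergent series needed to choose $m$ and run the tail-perturbation argument with the basis constant $C$. No gaps.
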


Finally, in this section, we consider the Schauder frames in dual spaces.

\begin{thm}
Suppose that $(x_{n}, f_{n})_{n=1}^{\infty}$ is an unconditional Schauder frame of a reflexive Banach
space $X$. Let $\{g_{n}\}_{n=1}^{\infty}$ be a sequence in $X^{*}$ which satisfies $$\mu:=\sum_{n=1}^{\infty}\|f_{n}-g_{n}\|\cdot \|x_{n}\|<1.$$
Then there is a sequence $\{y_{n}\}_{n=1}^{\infty}$ in $X$ so that
$(y_{n},g_{n})_{n=1}^{\infty}$ is an unconditional Schauder frame of $X$.
\end{thm}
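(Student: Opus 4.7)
The plan is to dualize the operator construction used in Theorem 3.1: since we are now perturbing the functionals $f_{n}$ to $g_{n}$ rather than the vectors, the correct candidate operator is the one that reconstructs elements of $X$ via the new functionals and the original vectors. Setting $y_{n}:=M^{-1}(x_{n})$ for this operator will supply the desired sequence.

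First I would define $M:X\to X$ by
$$M(x):=x-\sum_{n=1}^{\infty}(f_{n}-g_{n})(x)\,x_{n},$$
noting that the correction series converges absolutely, since $\sum_{n}|(f_{n}-g_{n})(x)|\,\|x_{n}\|\leq \mu\|x\|$. Hence $M$ is a bounded linear operator with $\|I-M\|\leq \mu<1$, so $M$ is an isomorphism of $X$ with $\|M^{-1}\|\leq (1-\mu)^{-1}$. Using the Schauder frame identity $x=\sum_{n}f_{n}(x)x_{n}$, I would rewrite this as
$$M(x)=\sum_{n=1}^{\infty}g_{n}(x)\,x_{n},$$
the series converging in the given order.

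Next, I would set $y_{n}:=M^{-1}(x_{n})$. For each $x\in X$, the continuity and linearity of $M^{-1}$ yield
$$\sum_{n=1}^{N}g_{n}(x)\,y_{n}=M^{-1}\Bigl(\sum_{n=1}^{N}g_{n}(x)\,x_{n}\Bigr)\longrightarrow M^{-1}(M(x))=x,$$
so that $(y_{n},g_{n})_{n=1}^{\infty}$ is a Schauder frame of $X$. For unconditionality, I would fix any permutation $\pi$ of $\mathbb{N}$: the series $\sum_{k}f_{\pi(k)}(x)x_{\pi(k)}$ converges by the unconditional frame hypothesis, while $\sum_{k}(f_{\pi(k)}-g_{\pi(k)})(x)x_{\pi(k)}$ converges absolutely. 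Subtracting shows that $\sum_{k}g_{\pi(k)}(x)x_{\pi(k)}$ converges; applying $M^{-1}$ to the partial sums then gives $\sum_{k}g_{\pi(k)}(x)y_{\pi(k)}=x$.

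The main conceptual step is spotting the operator $M(x)=\sum_{n}g_{n}(x)x_{n}$, which plays the role symmetric to Christensen's $L(x)=\sum_{n}f_{n}(x)y_{n}$ in Theorem 3.1; once $M$ is in hand the rest is routine bookkeeping. Strictly speaking this direct argument does not use reflexivity. A more systematic alternative that does use reflexivity is to first dualize $(x_{n},f_{n})_{n=1}^{\infty}$ into an unconditional Schauder frame $(f_{n},x_{n})_{n=1}^{\infty}$ of $X^{*}$, apply Theorem 3.1 in $X^{*}$ to perturb the vectors from $f_{n}$ to $g_{n}$ (the hypothesis $\mu<1$ is precisely what Theorem 3.1 requires in this dual form), and then dualize back via $X^{**}\cong X$; the potential obstacle there is verifying the norm (and unconditional) convergence of the dualized frame, which is where reflexivity plus unconditionality are genuinely needed.
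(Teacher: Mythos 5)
Your direct argument is correct, and it takes a genuinely different route from the paper. The paper's proof is essentially the ``systematic alternative'' you sketch at the end: it first shows, using reflexivity together with the Orlicz--Pettis theorem, that $(f_{n},x_{n})_{n=1}^{\infty}$ is a Schauder frame of $X^{*}$; it then defines $T:X^{*}\to X^{*}$ by $T(f)=\sum_{n}f(x_{n})g_{n}$, checks $\|I-T\|\leq\mu<1$, invokes reflexivity a second time to produce an isomorphism $L$ of $X$ with $L^{*}=T$, and finally sets $y_{n}=L^{-1}(x_{n})$. Your operator $M(x)=\sum_{n}g_{n}(x)x_{n}=x-\sum_{n}(f_{n}-g_{n})(x)x_{n}$ is exactly this predual operator $L$ (one checks $M^{*}=T$), but you build it directly on $X$ from the absolute convergence of the correction series, so the detour through $X^{*}$ disappears. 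The payoff is twofold: the proof is shorter (no Orlicz--Pettis, no $w^{*}$-limit argument), and it exposes that reflexivity is not needed for the stated conclusion --- it is an artifact of the dual-space route, which does, however, yield the extra fact that $(f_{n},x_{n})_{n=1}^{\infty}$ is a Schauder frame of $X^{*}$, a statement of independent interest. Your unconditionality step is also sound: for any permutation $\pi$ the rearranged frame expansion still sums to $x$, the rearranged correction series still sums (absolutely) to $x-M(x)$, so $\sum_{k}g_{\pi(k)}(x)x_{\pi(k)}=M(x)$, and applying the bounded operator $M^{-1}$ to the partial sums gives $\sum_{k}g_{\pi(k)}(x)y_{\pi(k)}=x$.
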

\begin{proof}
First we show that $(f_{n}, x_{n})_{n=1}^{\infty}$ is a Schauder frame of $X^{*}$. Indeed, fix any $f\in X^{*}$. Since the sequence $\{\sum_{i=1}^{n}f(x_{i})f_{i}\}_{i=1}^{n}$ converges to $f$ in the $w^{*}-$topology, it suffices to prove that the series $\sum_{n=1}^{\infty}f(x_{n})f_{n}$ converges in norm. By The Orlicz-Pettis Theorem, we only need to show that for any increasing sequence $\{i_{k}\}_{k=1}^{\infty}$ of positive integers, the series $\sum_{k=1}^{\infty}f(x_{i_{k}})f_{i_{k}}$ converges weakly. Since $X$ is reflexive, it is enough to show that, for each $x\in X$, the sequence $$\{\sum_{k=1}^{n}f(x_{i_{k}})f_{i_{k}}(x)\}_{n=1}^{\infty}=\{f(\sum_{k=1}^{n}f_{i_{k}}(x)x_{i_{k}})\}_{n=1}^{\infty}$$ is convergent. Since $\sum_{n=1}^{\infty}f_{n}(x)x_{n}$ converges unconditionally in norm, the series $\sum_{k=1}^{\infty}f_{i_{k}}(x)x_{i_{k}}$ converges in norm. Consequently,
the sequence $\{\sum_{k=1}^{n}f(x_{i_{k}})f_{i_{k}}(x)\}_{n=1}^{\infty}$ converges.

Define $$T: X^{*}\rightarrow X^{*}, T(f)=\sum_{n=1}^{\infty}f(x_{n})g_{n}.$$
Then $T$ is well-defined,linear and bounded. Indeed, for $m\leq n$, by the assumption, we have
\begin{align*}
\|\sum_{i=m}^{n}f(x_{i})g_{i}\|&\leq \|\sum_{i=m}^{n}f(x_{i})(g_{i}-f_{i})\|+\|\sum_{i=m}^{n}f(x_{i})f_{i}\|\\
&\leq \sum_{i=m}^{n}\|f\|\cdot \|x_{i}\|\cdot \|g_{i}-f_{i}\|+\|\sum_{i=m}^{n}f(x_{i})f_{i}\|\\
&\rightarrow 0(m,n\rightarrow\infty).
\end{align*}
Moreover, $\|T\|\leq \mu+1$.
It is easy to check that  $\|I-T\|\leq \mu<1$.

Thus $T$ is an isomorphism from $X^{*}$ onto $X^{*}$. It follows from the reflexivity of $X$ that there exists an isomorphism $L$ from $X$ onto $X$ such that $L^{*}=T$.

Let $y_{n}=L^{-1}(x_{n})$ for each $n$. We finish the proof by showing that $(y_{n},g_{n})_{n=1}^{\infty}$ is an unconditional Schauder frame of $X$.
First we prove that the series $\sum_{n=1}^{\infty}g_{n}(x)y_{n}$ converges unconditionally in norm for any $x\in X$. Indeed, for any increasing sequence $\{i_{k}\}_{k=1}^{\infty}$ of positive integers and
for $m\leq n$, we have
\begin{align*}
\|\sum_{i=m}^{n}g_{i_{k}}(x)y_{i_{k}}\|&=\|L^{-1}(\sum_{i=m}^{n}g_{i_{k}}(x)x_{i_{k}})\|\\
&\leq \|L^{-1}\|\cdot (\|\sum_{i=m}^{n}(g_{i_{k}}(x)-f_{i_{k}}(x))x_{i_{k}}\|+\|\sum_{i=m}^{n}f_{i_{k}}(x)x_{i_{k}}\|)\\
&\leq \|L^{-1}\|\cdot (\sum_{i=m}^{n}\|g_{i_{k}}-f_{i_{k}}\|\|x_{i_{k}}\|\|x\|+\|\sum_{i=m}^{n}f_{i_{k}}(x)x_{i_{k}}\|)\\
&\rightarrow 0(m,n\rightarrow \infty).
\end{align*}
Thus the series $\sum_{k=1}^{\infty}g_{i_{k}}(x)y_{i_{k}}$ converges and hence $\sum_{n=1}^{\infty}g_{n}(x)y_{n}$ converges unconditionally.

So it remains to show that
$$f(x)=f(\sum_{n=1}^{\infty}g_{n}(x)y_{n}), \forall f\in X^{*}.$$
Indeed,
\begin{align*}
f(x)&=(T(T^{-1}f))(x)\\
&=(\sum_{n=1}^{\infty}(T^{-1}f)(x_{n})g_{n})(x)\\
&=(\sum_{n=1}^{\infty}((L^{-1})^{*}f)(x_{n})g_{n})(x)\\
&=(\sum_{n=1}^{\infty}f(L^{-1}(x_{n}))g_{n})(x)\\
&=(\sum_{n=1}^{\infty}f(y_{n})g_{n})(x)\\
&=\sum_{n=1}^{\infty}f(y_{n})g_{n}(x)\\
&=f(\sum_{n=1}^{\infty}g_{n}(x)y_{n}).
\end{align*}
The proof is completed.
\end{proof}

{\bf Acknowledgements.} We would like to express our gratitude to the referee's suggestions and comments.


\end{document}